\documentclass[11pt]{amsart}

\usepackage[T1]{fontenc}
\usepackage[utf8]{inputenc}
\usepackage{lmodern}
\usepackage{microtype}
\usepackage{mathtools}
\usepackage{amssymb}
\usepackage{amsmath}
\usepackage{amsthm}
\usepackage{xcolor}
\usepackage[margin=2.8cm]{geometry}
\usepackage[colorlinks=true,linkcolor=blue!45!black,citecolor=blue!45!black,urlcolor=blue!45!black]{hyperref}

\newtheorem{theorem}{Theorem}[section]
\newtheorem{proposition}[theorem]{Proposition}
\newtheorem{corollary}[theorem]{Corollary}

\theoremstyle{definition}

\newtheorem{example}[theorem]{Example}
\theoremstyle{remark}
\newtheorem{remark}[theorem]{Remark}

\DeclareMathOperator{\Rep}{Rep}
\DeclareMathOperator{\Gr}{Gr}
\DeclareMathOperator{\Grass}{Grass}
\DeclareMathOperator{\SI}{SI}
\DeclareMathOperator{\GL}{GL}

\DeclareMathOperator{\Hom}{Hom}
\DeclareMathOperator{\Ext}{Ext}
\DeclareMathOperator{\ext}{ext}
\DeclareMathOperator{\rk}{rk}

\newcommand{\CC}{\mathbb C}
\newcommand{\NN}{\mathbb N}
\newcommand{\ZZ}{\mathbb Z}
\newcommand{\cS}{\mathcal S}
\newcommand{\cQ}{\mathcal Q}
\newcommand{\cE}{\mathcal E}
\newcommand{\cV}{\mathcal V}
\newcommand{\cO}{\mathcal O}
\newcommand{\bmu}{\boldsymbol\mu}
\newcommand{\bnu}{\boldsymbol\nu}
\newcommand{\topchi}{\chi_{\mathrm{top}}}
\newcommand{\comp}{\mathsf c}
\newcommand{\Schur}{\mathbb S}

\title[Generic quiver Grassmannians and semi-invariants]
{Euler Characteristics of Generic Quiver Grassmannians: Semi-Invariants and Localization}
\author{Jiarui Fei}
\address{School of Mathematical Sciences, Shanghai Jiao Tong University}
\email{jiarui@sjtu.edu.cn}
\thanks{The author was supported in part by National Natural Science Foundation of China (No. 12131015 and No. 12571038)}
\date{}

\subjclass[2020]{16G20, 14M15, 14C17, 14N15}
\keywords{quiver representation, quiver Grassmannian, semi-invariant, Schubert calculus, Euler characteristic, Chern class, equivariant localization}

\begin{document}

\begin{abstract}
	We study the topological Euler characteristic of the quiver Grassmannian for a generic representation. 
	We give a Chern class formula and a finite torus-localization formula. 
	If the quiver $Q$ is acyclic, the formula can be combined with the covariant calculation of Derksen--Schofield--Weyman
	to obtain a finite integral linear combination of covariant multiplicities, 
	or equivalently of semi-invariant multiplicities for a single flag-extended quiver of $Q$. 
	We also give explicit formulas and examples for generalized Kronecker quivers, 
	and record the same construction for multiplicative characteristic genera.
\end{abstract}

\maketitle

\section{Introduction}

Let $Q=(Q_0,Q_1)$ be a finite quiver.
No acyclicity assumption is needed in Sections~2 and~3; beginning with Section~4, we assume that $Q$ has no oriented cycles.
For a dimension vector $\alpha\in\NN^{Q_0}$, let $\Rep(Q,\alpha)$ denote the affine space of $\alpha$-dimensional representations of $Q$. Set
\[\Grass(\beta,\alpha)=\prod_{x\in Q_0}\Gr\bigl(\beta(x),\alpha(x)\bigr).\]
The incidence variety
\[Z(Q,\beta,\alpha)=\left\{(V,U)\in \Rep(Q,\alpha)\times \Grass(\beta,\alpha)\;\middle|\;U\text{ is a subrepresentation of }V\right\}\]
comes with a proper projection
\[q\colon Z(Q,\beta,\alpha)\longrightarrow \Rep(Q,\alpha).\]
Its scheme-theoretic fiber over $V$ is the quiver Grassmannian $\Gr_\beta(V)$. We write $\topchi$ for the compactly supported topological Euler characteristic; on projective varieties this is the usual Euler characteristic. This Euler characteristics enter the Caldero--Chapoton formula \cite{CC}
and Dupont's generic character \cite[Section~7.1]{Du} for cluster algebras.

Write $\alpha=\beta+\gamma$, and use the Euler form
\[\langle\delta,\varepsilon\rangle=\sum_{x\in Q_0}\delta(x)\varepsilon(x)-\sum_{a\in Q_1}\delta(t(a))\varepsilon(h(a)).\]
For the semi-invariant results, assume that $Q$ is acyclic. Schofield's criterion identifies dominance of $q$ with vanishing of the generic $\Ext_Q^1$-dimension $\ext_Q(\beta,\gamma)$ \cite[Theorem~3.3]{SchofieldGeneral}. When in addition $\langle\beta,\gamma\rangle=0$, the generic fiber is finite. Derksen, Schofield and Weyman proved that its cardinality is
\[\dim \SI(Q,\gamma)_{\langle\beta,\cdot\rangle},\]
where $\SI(Q,\gamma)_{\langle\beta,\cdot\rangle}$ is the weight-$\langle\beta,\cdot\rangle$ subspace of the ring of semi-invariants on $\Rep(Q,\gamma)$; their proof identifies both sides with the same Littlewood--Richardson expression \cite{DSW}. The intersection-theoretic input is Crawley-Boevey's computation of the cycle of a generic quiver Grassmannian as the top Chern class of a natural vector bundle on $\Grass(\beta,\alpha)$ \cite{CB}. Derksen--Schofield--Weyman also treated the positive-dimensional cycle: the coefficients of its Schubert expansion are covariant multiplicities \cite[Section~6]{DSW}.

The purpose of this note is to extract the topological Euler characteristic from that positive-dimensional cycle. Assume that $q$ is dominant and put $d=\langle\beta,\gamma\rangle$.
For a general representation $V$, the fiber $\Gr_\beta(V)$ is smooth and projective of dimension $d$. Set
\[X=\Grass(\beta,\alpha),\qquad\cE=\bigoplus_{a\in Q_1}\cS_{t(a)}^\vee\otimes\cQ_{h(a)},\]
where $\cS_x$ and $\cQ_x$ are the universal subbundle and quotient bundle on the factor $\Gr(\beta(x),\alpha(x))$.
The fiber is the zero scheme of the section of $\cE$ induced by $V$, and its normal bundle in $X$ is $\cE|_{\Gr_\beta(V)}$. The standard characteristic-class formula for a smooth zero locus therefore gives
\[\topchi(\Gr_\beta(V))=\int_X c_{\rk\cE}(\cE)\left[\frac{c(T_X)}{c(\cE)}\right]_d,\]
where $c$ denotes the total Chern class, $\rk$ denotes rank, and $[\,\cdot\,]_d$ denotes the codimension-$d$ component. The same integral also admits a finite torus-localization formula, valid for every finite quiver and indexed by tuples of coordinate subspaces at the vertices; see Theorem~\ref{thm:quiver-localization}. The point of the paper is not only the zero-locus identity by itself, but also its combination with the covariant formula of Derksen--Schofield--Weyman.

Indeed, let $\bmu=(\mu(x))_{x\in Q_0}$ run through tuples of partitions, write $|\bmu|=\sum_x|\mu(x)|$, and let $\sigma_{\bmu}$ be the corresponding product Schubert class. Expanding
\[\left[\frac{c(T_X)}{c(\cE)}\right]_d=\sum_{|\bmu|=d}a_{\bmu}\sigma_{\bmu}\]
defines integers $a_{\bmu}$. Schubert duality and the covariant interpretation of the generic fiber class then yield
\[\topchi(\Gr_\beta(V))=\sum_{|\bmu|=d}a_{\bmu}M(\beta,\alpha,\bmu),\]
where $M(\beta,\alpha,\bmu)$ is the Derksen--Schofield--Weyman covariant multiplicity paired with $\sigma_{\bmu}$.
Finally, attach a type-$A$ flag arm of length $\gamma(x)$ to every vertex $x$. The resulting quiver $\widehat Q$ and representation-space dimension vector $\widehat\gamma$ depend only on $(Q,\gamma)$; denote by $\widehat\sigma_{\bmu}$ the semi-invariant weight determined by $\bmu$. The flag-extension construction of \cite{DSW} gives
\[M(\beta,\alpha,\bmu)=\dim \SI(\widehat Q,\widehat\gamma)_{\widehat\sigma_{\bmu}},\]
and hence the main formula
\[\topchi(\Gr_\beta(V))=\sum_{|\bmu|=d}a_{\bmu}\dim \SI(\widehat Q,\widehat\gamma)_{\widehat\sigma_{\bmu}}.\]
For $d=0$, the virtual tangent factor has degree-zero part $1$, and this reduces to the theorem of Derksen--Schofield--Weyman.

The theorem separates the calculation into two independent pieces.  The cycle class of the generic quiver Grassmannian contributes the nonnegative covariant multiplicities $M(\beta,\alpha,\bmu)$, while the virtual tangent bundle $T_X-\cE$ contributes the integral, and generally signed, coefficients $a_{\bmu}$.  Their Poincar\'e pairing is the Euler characteristic.  Thus the positive-dimensional formula is best viewed as a virtual refinement of the zero-dimensional theorem of Derksen--Schofield--Weyman, rather than as a different counting principle.  The same separation persists beyond the Euler characteristic: Corollary~\ref{cor:all-chern-semi} and Appendix~\ref{sec:genera} give analogous formulas for every Chern number and every multiplicative characteristic genus.  
%For fixed $\alpha$, only finitely many subdimension vectors $\beta$ occur, so their generic Euler characteristics are attained simultaneously on a dense open subset of $\Rep(Q,\alpha)$.  

The generic Euler characteristic can be negative \cite[Example~3.6]{DWZ}, so the signs in the formula are essential: one genuinely needs a virtual linear combination.  Throughout the paper we work with generic smooth fibers.  For a special singular fiber, the fundamental cycle alone does not determine the topological Euler characteristic, so the results of \cite{CB,DSW} do not by themselves yield an analogous formula.

The organization is as follows.  Section~2 realizes generic quiver Grassmannians as regular zero loci.  Section~3 derives their characteristic-number and localization formulas.  Section~4 identifies the Schubert coefficients of the generic fiber with covariant multiplicities and then with semi-invariant spaces on flag-extended quivers.  Section~\ref{sec:kronecker} specializes the theory to generalized Kronecker quivers, where we obtain a compact Chern-root coefficient formula, a rank-stratification interpretation, and two families of interesting examples.  Appendix~\ref{sec:genera} records the extension to arbitrary multiplicative genera.

\section{Generic quiver Grassmannians as regular zero loci}

We work over $\CC$. Fix $\alpha=\beta+\gamma$ with
$\beta,\gamma\in\NN^{Q_0}$, and keep the incidence morphism
$q\colon Z(Q,\beta,\alpha)\to\Rep(Q,\alpha)$ from the introduction. Put
\[X=\Grass(\beta,\alpha) =\prod_{x\in Q_0}\Gr\bigl(\beta(x),\alpha(x)\bigr).\]
On the factor indexed by $x$, let $\cS_x$ and $\cQ_x$ be the universal
subbundle and quotient bundle, so that
\[0\longrightarrow \cS_x
\longrightarrow \CC^{\alpha(x)}\otimes\cO_X
\longrightarrow \cQ_x
\longrightarrow 0,\qquad
\rk\cS_x=\beta(x),\quad \rk\cQ_x=\gamma(x).
\]
Set
\begin{equation}\label{eq:Edef}
\cE=\bigoplus_{a\in Q_1}\cS_{t(a)}^\vee\otimes\cQ_{h(a)}
\end{equation}
and write
\[r:=\rk\cE=\sum_{a\in Q_1}\beta(t(a))\gamma(h(a)),\qquad d:=\dim X-r=\langle\beta,\gamma\rangle.\]

A representation $V=(V(a))_{a\in Q_1}\in\Rep(Q,\alpha)$ induces a section
$s_V\in H^0(X,\cE)$ whose $a$-component at
$U=(U(x))_{x\in Q_0}\in X$ is the map
\[U(t(a))\longrightarrow \CC^{\alpha(h(a))}/U(h(a))\]
induced by $V(a)$. Hence, scheme-theoretically,
\begin{equation}\label{eq:fiber-zero-locus}
\Gr_\beta(V)=Z(s_V)\subset X.
\end{equation}
Equivalently, the universal evaluation morphism
\[\Phi\colon \Rep(Q,\alpha)\otimes\cO_X\longrightarrow\cE,\qquad V\longmapsto s_V,\]
is fiberwise surjective, and the total space of $\ker\Phi$ is
$Z(Q,\beta,\alpha)$. Thus the second projection
$p\colon Z(Q,\beta,\alpha)\to X$ is a vector bundle of rank
$\dim\Rep(Q,\alpha)-r$. In particular, $Z(Q,\beta,\alpha)$ is smooth and
irreducible, and
\begin{equation}\label{eq:incidence-dimension}
\dim Z(Q,\beta,\alpha)-\dim\Rep(Q,\alpha)=d.
\end{equation}
This is Schofield's incidence construction \cite[\S3]{SchofieldGeneral} in
the vector-bundle form used by Crawley--Boevey \cite[p.~365]{CB}.

For dimension vectors $\delta,\varepsilon\in\NN^{Q_0}$, let
\[\ext_Q(\delta,\varepsilon):=\min\left\{
\dim_{\CC}\Ext_Q^1(M,N)\ \middle|M\in\Rep(Q,\delta),\ N\in\Rep(Q,\varepsilon)
\right\}.\]
Upper semicontinuity shows that this is the value of
$\dim_{\CC}\Ext_Q^1(M,N)$ for a general pair $(M,N)$. Schofield's
$\ext$-criterion \cite[Theorem~3.3]{SchofieldGeneral} states that
\begin{equation}\label{eq:schofield-ext-criterion}
q\text{ is dominant}
\quad\Longleftrightarrow\quad
\ext_Q(\beta,\gamma)=0.
\end{equation}
Equivalently, a general $\alpha$-dimensional representation has a
$\beta$-dimensional subrepresentation precisely when the generic
$\Ext_Q^1$ between dimensions $\beta$ and $\gamma$ vanishes.

The following proposition combines Crawley--Boevey's
regular-zero-locus calculation \cite[pp.~364--365]{CB} with
generic smoothness in characteristic zero (see \cite{SchofieldGeneral}).
\begin{proposition}[{\cite{CB},\cite{SchofieldGeneral}}]\label{prop:generic-smooth}
Assume the equivalent conditions in \eqref{eq:schofield-ext-criterion}.
Then there is a nonempty Zariski-open subset
$U\subset\Rep(Q,\alpha)$ such that, for every $V\in U$, the quiver
Grassmannian $\Gr_\beta(V)$ is smooth and projective of dimension $d\geq 0$.
Moreover, $s_V$ is a regular section and
\begin{equation}\label{eq:fundamental-class}
(i_V)_*[\Gr_\beta(V)]=c_r(\cE)\cap[X]
\end{equation}
in $A_*(X)$, where $i_V\colon\Gr_\beta(V)\hookrightarrow X$ is the
inclusion.
\end{proposition}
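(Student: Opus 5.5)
The plan is to apply generic smoothness to the smooth incidence variety and then read off the cycle class from the standard intersection theory of regular sections. By the discussion preceding the statement, $p\colon Z(Q,\beta,\alpha)\to X$ is a vector bundle, so $Z=Z(Q,\beta,\alpha)$ is smooth and irreducible of dimension $\dim\Rep(Q,\alpha)+d$, by \eqref{eq:incidence-dimension}. The morphism $q\colon Z\to\Rep(Q,\alpha)$ is proper, being the restriction of the projection $\Rep(Q,\alpha)\times X\to\Rep(Q,\alpha)$ (with $X$ projective) to the closed subvariety $Z$. By hypothesis \eqref{eq:schofield-ext-criterion}, $q$ is dominant, hence surjective since it is proper. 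In particular $d\ge 0$, because a dominant morphism cannot decrease dimension.

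Since we are in characteristic zero and $Z$ is smooth, generic smoothness gives a nonempty Zariski-open $U\subset\Rep(Q,\alpha)$ over which $q$ is smooth. After shrinking $U$, we may also assume, by generic flatness and upper semicontinuity of fiber dimension, that every fiber $q^{-1}(V)$ with $V\in U$ is nonempty and of pure dimension $\dim Z-\dim\Rep(Q,\alpha)=d$. The scheme-theoretic fiber $q^{-1}(V)$ is exactly $\Gr_\beta(V)=Z(s_V)$ by \eqref{eq:fiber-zero-locus}. It is a closed subscheme of the projective variety $X$, hence projective, and by the choice of $U$ it is smooth of dimension $d$. This proves the first assertion.

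For regularity, note that $Z(s_V)$ is the zero scheme of a section of a bundle of rank $r$ on the smooth (hence Cohen--Macaulay) variety $X$. Its codimension is $\dim X-d=r$. In a Cohen--Macaulay local ring, $r$ local equations cutting out a subscheme of codimension $r$ form a regular sequence, so $s_V$ is a regular section. Fulton's formula for the localized top Chern class of a regular section then gives $[Z(s_V)]=c_r(\cE)\cap[X]$ after pushforward to $A_*(X)$; this is \eqref{eq:fundamental-class}. Equivalently, the refined class $\mathbb{Z}(s_V)$ equals the fundamental cycle of the zero scheme, because that scheme is a local complete intersection of the expected dimension. Since $\Gr_\beta(V)$ is smooth, hence reduced, its fundamental cycle $[\Gr_\beta(V)]$ has multiplicity one on each component. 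This is precisely Crawley-Boevey's computation.

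The main subtlety is the identification of the scheme-theoretic fiber with $Z(s_V)$, so that generic smoothness, which concerns scheme-theoretic fibers, really controls the zero scheme and not only its reduction. This follows from \eqref{eq:fiber-zero-locus}, since $Z$ is defined as the zero scheme of the universal section $\Phi$ in $\Rep(Q,\alpha)\times X$, and base change to $V$ recovers $s_V$. A second point to check is the codimension count that gives regularity: the fiber dimension $d$ must equal $\dim X-r$. This is exactly the identity $d=\langle\beta,\gamma\rangle=\dim X-r$ recorded when $r$ was defined.
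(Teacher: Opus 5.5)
Your proposal is correct and follows essentially the same route as the paper's argument, which is only a citation of \cite{CB,SchofieldGeneral} plus a short sketch: the incidence variety is smooth and irreducible because it is a vector bundle over $X$, $q$ is proper and dominant, and generic smoothness in characteristic zero gives smooth projective fibers $Z(s_V)$ of dimension $d=\dim X-r$; regularity of $s_V$ then follows from the codimension count, and Fulton's zero-locus formula gives the cycle class. The extra shrinking of $U$ via generic flatness is harmless but unnecessary, since a smooth morphism from the irreducible $Z$ already has all nonempty fibers of pure dimension $\dim Z-\dim\Rep(Q,\alpha)=d$.
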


%\begin{proof}
%The incidence variety is smooth and irreducible by the vector-bundle
%description above, and $q$ is projective because it is the restriction of
%the projection $\Rep(Q,\alpha)\times X\to\Rep(Q,\alpha)$. Since $q$ is
%dominant, generic smoothness over $\CC$ gives a nonempty open set $U$ over
%which $q$ is smooth \cite[III, Corollary~10.7]{Ha}. Its fibers are
%projective and, by \eqref{eq:incidence-dimension}, have dimension $d$; in
%particular, $d\geq0$.
%
%For $V\in U$, equation \eqref{eq:fiber-zero-locus} realizes
%$\Gr_\beta(V)$ as a smooth zero scheme of codimension
%$\dim X-d=r$. Hence $s_V$ is regular, and the standard zero-locus formula
%gives \eqref{eq:fundamental-class} \cite[Example~14.1.1]{Fu}.
%\end{proof}

For later use, the tangent bundle of the ambient product is
\begin{equation}\label{eq:TX}
T_X=\bigoplus_{x\in Q_0}\cS_x^\vee\otimes\cQ_x.
\end{equation}
For $V\in U$, regularity identifies the normal bundle of
$\Gr_\beta(V)\subset X$ with $\cE|_{\Gr_\beta(V)}$, and therefore
\begin{equation}\label{eq:normal-sequence}
0\longrightarrow T_{\Gr_\beta(V)}
\longrightarrow T_X|_{\Gr_\beta(V)}
\longrightarrow \cE|_{\Gr_\beta(V)}
\longrightarrow 0.
\end{equation}
Consequently,
\begin{equation}\label{eq:virtual-tangent}
c(T_{\Gr_\beta(V)})
=i_V^*\!\left(\frac{c(T_X)}{c(\cE)}\right).
\end{equation}

\section{Characteristic numbers and finite localization}
The following identity is the standard characteristic-class formula for the smooth zero locus of a regular section. It follows by combining the Fulton-class formula \cite[Example~4.2.6]{Fu} with the complex Chern--Gauss--Bonnet theorem \cite{Ch}. We include the short proof to fix conventions.
For an inhomogeneous Chow class $\eta$, let $[\eta]_k$ be its codimension-$k$ part.

\begin{theorem}[Chern--Gauss--Bonnet formula]\label{thm:GB}
Under the hypotheses of Proposition~\ref{prop:generic-smooth}, for $V$ general one has
\begin{equation}\label{eq:euler-integral}
\topchi(\Gr_\beta(V))=\int_X c_r(\cE)\left[\frac{c(T_X)}{c(\cE)}\right]_d.
\end{equation}
Equivalently, using \eqref{eq:Edef} and \eqref{eq:TX},
\[\topchi(\Gr_\beta(V))=\int_X
c_r\!\left(\bigoplus_{a\in Q_1}\cS_{t(a)}^\vee\otimes\cQ_{h(a)}\right)
\left[
\frac{\prod_{x\in Q_0}c(\cS_x^\vee\otimes\cQ_x)}
{\prod_{a\in Q_1}c(\cS_{t(a)}^\vee\otimes\cQ_{h(a)})}
\right]_d.\]
\end{theorem}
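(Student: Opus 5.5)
The plan is to combine three facts already available: the Gauss--Bonnet theorem on the smooth fiber, the Chern class formula \eqref{eq:virtual-tangent} for its tangent bundle, and the projection formula together with \eqref{eq:fundamental-class} to move the integral onto $X$.

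Fix $V$ in the open set $U$ of Proposition~\ref{prop:generic-smooth}, and write $Y=\Gr_\beta(V)$ and $i=i_V$. Since $Y$ is a smooth projective complex variety of dimension $d$, the complex Chern--Gauss--Bonnet theorem \cite{Ch} gives
\[\topchi(Y)=\int_Y c_d(T_Y).\]
Because $Y$ is compact, the compactly supported Euler characteristic agrees with the ordinary one. If $Y$ is disconnected, I apply the theorem to each connected component (all of them have dimension $d$) and add the results. Next, the short exact sequence \eqref{eq:normal-sequence} and the Whitney formula give \eqref{eq:virtual-tangent}. Taking the degree-$d$ part, and using that $i^*$ preserves codimension, I get
\[c_d(T_Y)=i^*\Bigl[\tfrac{c(T_X)}{c(\cE)}\Bigr]_d .\]
The inverse $c(\cE)^{-1}$ makes sense in $A^*(X)$ because $c(\cE)$ is $1$ plus nilpotent terms.

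Now set $\eta=\bigl[c(T_X)/c(\cE)\bigr]_d\in A^d(X)$. By the projection formula,
\[\int_Y i^*\eta\cap[Y]=\int_X \eta\cap i_*[Y].\]
By \eqref{eq:fundamental-class}, $i_*[Y]=c_r(\cE)\cap[X]$, so the right-hand side equals $\int_X c_r(\cE)\,\eta$. This is \eqref{eq:euler-integral}. For the second, explicit form, substitute \eqref{eq:Edef} and \eqref{eq:TX} and use multiplicativity of the total Chern class on direct sums; in particular $c(T_X)=\prod_x c(\cS_x^\vee\otimes\cQ_x)$.

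The one point needing care is the passage from Chow groups to topology. The Gauss--Bonnet theorem computes $\topchi$ from the top Chern class in cohomology, whereas \eqref{eq:fundamental-class} is a statement in $A_*(X)$. I would resolve this with the cycle class map $A_*\to H_*$. It is compatible with Chern classes, pullback and pushforward, and it preserves degrees of zero-cycles. Alternatively, one can note that $Y$ and $X$ have cellular (Schubert) Chow groups equal to homology, although this is not needed. Either way the degree computed in Chow equals the cohomological integral, and that completes the argument.
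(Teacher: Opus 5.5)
Your proposal is correct and follows essentially the same route as the paper: Chern--Gauss--Bonnet on the smooth fiber, the identity \eqref{eq:virtual-tangent} from the normal sequence, the projection formula, and then the fundamental-class identity \eqref{eq:fundamental-class}. Your extra remarks, on the cycle class map $A_*\to H_*$, on connected components, and on inverting $c(\cE)$, only spell out points the paper's proof leaves implicit.
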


\begin{proof}
The complex Chern--Gauss--Bonnet theorem \cite{Ch} gives
\[\topchi(\Gr_\beta(V))=\int_{\Gr_\beta(V)}c_d(T_{\Gr_\beta(V)}).\]
Substituting \eqref{eq:virtual-tangent} and applying the projection formula yields
\[\topchi(\Gr_\beta(V))=\int_X(i_V)_*[\Gr_\beta(V)]\left[\frac{c(T_X)}{c(\cE)}\right]_d.\]
Now use \eqref{eq:fundamental-class}.
\end{proof}

Formula \eqref{eq:euler-integral} is directly computable by Schubert calculus. For example, let $u_{x,1},\ldots,u_{x,\beta(x)}$ be formal Chern roots of $\cS_x^\vee$ and let $v_{x,1},\ldots,v_{x,\gamma(x)}$ be formal Chern roots of $\cQ_x$. Then
\[\frac{c(T_X)}{c(\cE)}=\frac{\displaystyle\prod_{x\in Q_0}\prod_{i=1}^{\beta(x)}\prod_{j=1}^{\gamma(x)}(1+u_{x,i}+v_{x,j})}
{\displaystyle\prod_{a\in Q_1}\prod_{i=1}^{\beta(t(a))}\prod_{j=1}^{\gamma(h(a))}(1+u_{t(a),i}+v_{h(a),j}) },
\]
subject to the usual Grassmannian relations $c(\cS_x)c(\cQ_x)=1$.

Formula~\eqref{eq:euler-integral} can also be evaluated without reducing products in the Schubert basis. Let
\[\mathbb T=\prod_{x\in Q_0}(\CC^\times)^{\alpha(x)}\]
act diagonally on the coordinate space $\CC^{\alpha(x)}$ at each vertex, and write
$\lambda_{x,1},\ldots,\lambda_{x,\alpha(x)}$ for the corresponding characters. The $\mathbb T$-fixed points of $X$ are indexed by tuples
\[\mathbf I=(I_x)_{x\in Q_0},\qquad I_x\subseteq[\alpha(x)],\qquad |I_x|=\beta(x),\]
where $[n]=\{1,\ldots,n\}$. Denote the corresponding fixed point by $p_{\mathbf I}$. For such a tuple, define the following multisets of characters:
\begin{align}
\mathsf T_{\mathbf I} &=
\bigsqcup_{x\in Q_0}
\bigl\{
\lambda_{x,j}-\lambda_{x,i}
:\ i\in I_x,\ j\notin I_x
\bigr\},
\label{eq:localization-tangent-weights}\\
\mathsf E_{\mathbf I} &=
\bigsqcup_{a\in Q_1}
\bigl\{
\lambda_{h(a),j}-\lambda_{t(a),i}
:\ i\in I_{t(a)},\ j\notin I_{h(a)}
\bigr\}.
\label{eq:localization-bundle-weights}
\end{align}
These are multisets: repeated arrows and repeated characters are retained with multiplicity. Set
\[\mathsf H_d(\mathbf I)=[t^d]\frac{\prod_{\tau\in\mathsf T_{\mathbf I}}(1+t\tau)}{\prod_{\varepsilon\in\mathsf E_{\mathbf I}}(1+t\varepsilon)},\]
where the rational function is expanded at $t=0$ and empty products are one.

\begin{theorem}[Finite localization formula]\label{thm:quiver-localization}
Under the hypotheses of Proposition~\ref{prop:generic-smooth}, one has
\begin{equation}\label{eq:quiver-localization}
\topchi(\Gr_\beta(V))=\sum_{\mathbf I}\frac{\prod_{\varepsilon\in\mathsf E_{\mathbf I}}\varepsilon}
{\prod_{\tau\in\mathsf T_{\mathbf I}}\tau}\,\mathsf H_d(\mathbf I),
\end{equation}
where the sum runs over all tuples $\mathbf I=(I_x)_{x\in Q_0}$ with
$|I_x|=\beta(x)$. The right-hand side, initially an element of the fraction field of
$\ZZ[\lambda_{x,k}]$, is independent of the characters and is the integer
$\topchi(\Gr_\beta(V))$.
\end{theorem}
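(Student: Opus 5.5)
The plan is to apply Atiyah--Bott (or Edidin--Graham) localization to the integral in Theorem~\ref{thm:GB}, after lifting its integrand to $\mathbb T$-equivariant cohomology. The torus $\mathbb T$ acts on each $\CC^{\alpha(x)}$, so the bundles $\cS_x$, $\cQ_x$, $T_X$ and $\cE$ are all $\mathbb T$-equivariant. This is because $\cE$ and $T_X$ are built from the tautological bundles by duals, tensor products and direct sums. Let $\tilde\eta\in H^*_{\mathbb T}(X)$ denote the equivariant class
\[\tilde\eta=c^{\mathbb T}_r(\cE)\cdot\Bigl[\frac{c^{\mathbb T}(T_X)}{c^{\mathbb T}(\cE)}\Bigr]_d .\]
Here $[\cdot]_d$ is taken in the grading of the equivariant Chern classes. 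Concretely, we insert a formal parameter $t$ with $c_t=\sum c_k t^k$ and extract the coefficient of $t^d$. The class $\tilde\eta$ is homogeneous of degree $r+d=\dim X$. Its equivariant integral $\int_X\tilde\eta$ therefore lies in $H^0_{\mathbb T}(\mathrm{pt})=\ZZ$. It equals the nonequivariant integral $\int_X c_r(\cE)[c(T_X)/c(\cE)]_d$, because pushforward to a point commutes with the restriction $H^*_{\mathbb T}\to H^*$. This restriction is the identity in degree zero. By Theorem~\ref{thm:GB}, that integer is $\topchi(\Gr_\beta(V))$.

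Next I compute the fixed-point data. The fixed points of $X$ are the coordinate tuples $p_{\mathbf I}$, and there are finitely many of them. At $p_{\mathbf I}$ we have $\cS_x=\mathrm{span}\{e_i:i\in I_x\}$ with weights $\lambda_{x,i}$, and $\cQ_x$ has weights $\lambda_{x,j}$ for $j\notin I_x$. Hence $\cS_{t(a)}^\vee\otimes\cQ_{h(a)}$ has weights $\lambda_{h(a),j}-\lambda_{t(a),i}$, which is exactly the multiset $\mathsf E_{\mathbf I}$. The tangent space $T_{p_{\mathbf I}}X$ has weights $\mathsf T_{\mathbf I}$. The equivariant Euler class of $T_{p_{\mathbf I}}X$ is therefore $\prod_{\tau\in\mathsf T_{\mathbf I}}\tau$. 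This product is nonzero because the characters $\lambda_{x,j}-\lambda_{x,i}$ with $i\neq j$ are nonzero. Restricting $\tilde\eta$ to $p_{\mathbf I}$ gives two factors. The first is $c_r^{\mathbb T}(\cE)|_{p_{\mathbf I}}=\prod_{\varepsilon\in\mathsf E_{\mathbf I}}\varepsilon$, since $r=|\mathsf E_{\mathbf I}|$. The second is $[c^{\mathbb T}(T_X)/c^{\mathbb T}(\cE)]_d|_{p_{\mathbf I}}$. Since restriction is a ring map compatible with the grading, this equals the degree-$d$ part of $\prod(1+\tau)/\prod(1+\varepsilon)$, which is $\mathsf H_d(\mathbf I)$ after rescaling each weight by $t$. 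The Atiyah--Bott formula $\int_X\tilde\eta=\sum_{\mathbf I}\tilde\eta|_{p_{\mathbf I}}/e^{\mathbb T}(T_{p_{\mathbf I}}X)$, read in the fraction field of $H^*_{\mathbb T}(\mathrm{pt})=\ZZ[\lambda_{x,k}]$, then yields \eqref{eq:quiver-localization}.

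The main obstacle is bookkeeping rather than a genuine difficulty. I must ensure that the degree-$d$ truncation of the inverse Chern class is legitimate equivariantly. Equivariant cohomology is not bounded above, so $1/c^{\mathbb T}(\cE)$ must be read as a graded power series whose degree-$d$ component is a polynomial in the Chern classes. Restriction to fixed points commutes with this polynomial, which justifies the expansion of the rational function in $t$ at $t=0$. I also have to check the sign conventions, namely that the weights of $\cS^\vee$ are $-\lambda_{x,i}$ and those of $\cQ$ are $\lambda_{x,j}$. Finally, independence of the characters is automatic. The sum equals a degree-zero element of $H^*_{\mathbb T}(\mathrm{pt})$, so it is a constant integer even though each summand is a rational function.
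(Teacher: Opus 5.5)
Your proposal is correct and follows essentially the same route as the paper's proof: lift the Chern--Gauss--Bonnet integrand of Theorem~\ref{thm:GB} to $\mathbb T$-equivariant cohomology, read off the fixed-point weights $\mathsf T_{\mathbf I}$ and $\mathsf E_{\mathbf I}$ together with the restriction $\mathsf H_d(\mathbf I)$, and apply localization. Your degree-zero argument for independence of the characters, together with your remarks on the graded truncation, simply spell out the paper's one-line justification that the left-hand side is nonequivariant.
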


\begin{proof}
At the fixed point $p_{\mathbf I}$, the weights of the tangent space $T_{p_{\mathbf I}}X$ are precisely the elements of $\mathsf T_{\mathbf I}$, since
$T_X=\bigoplus_x\cS_x^\vee\otimes\cQ_x$. Likewise, the weights of the fiber of $\cE$ are the elements of $\mathsf E_{\mathbf I}$. Hence
\[e_{\mathbb T}(T_{p_{\mathbf I}}X)
=\prod_{\tau\in\mathsf T_{\mathbf I}}\tau,\qquad
c_r^{\mathbb T}(\cE)|_{p_{\mathbf I}}=
\prod_{\varepsilon\in\mathsf E_{\mathbf I}}\varepsilon,
\]
and
\[\left.
\left[
\frac{c^{\mathbb T}(T_X)}{c^{\mathbb T}(\cE)}
\right]_d
\right|_{p_{\mathbf I}}
=\mathsf H_d(\mathbf I).
\]
Applying equivariant localization \cite{EG} to the equivariant lift of the integrand in
\eqref{eq:euler-integral} gives
\[\int_X c_r(\cE)
\left[\frac{c(T_X)}{c(\cE)}\right]_d
=\sum_{\mathbf I}
\frac{ c_r^{\mathbb T}(\cE)|_{p_{\mathbf I}}}
{ e_{\mathbb T}(T_{p_{\mathbf I}}X) }
\left.
\left[
\frac{c^{\mathbb T}(T_X)}{c^{\mathbb T}(\cE)}
\right]_d
\right|_{p_{\mathbf I}}.
\]
Substituting the three fixed-point expressions above proves
\eqref{eq:quiver-localization}. Since the left-hand side is nonequivariant, the sum is independent of the characters.
\end{proof}

\begin{remark}[Effective computation]\label{rem:localization-computation}
Theorem~\ref{thm:quiver-localization} is directly implementable for any finite quiver. One may choose globally pairwise distinct integral weights, enumerate the fixed points, read off the tangent and bundle weights from $(Q,\alpha,\beta)$, and evaluate the finite sum in exact rational arithmetic. The accompanying file \texttt{quiver\_localization.py} implements this elementary procedure. No Schubert expansion or covariant calculation is required, although the number of fixed points may grow rapidly. When $d=0$, one has $\mathsf H_0(\mathbf I)=1$, and the formula reduces to the Bott-residue computation of $\deg c_r(\cE)$.
\end{remark}

\begin{corollary}
\label{cor:triple-flag-localization}
Let $\lambda,\mu,\nu$ be partitions in the $r\times(n-r)$ rectangle defining the triple-flag quiver data $(Q,\alpha,\beta)$ of \cite{DW}, and assume
$|\lambda|+|\mu|+|\nu|=r(n-r)$. Let $\nu^\vee$ be the complementary partition of $\nu$. Then the Littlewood-Richardson coefficient
\begin{equation}\label{eq:LR-localization}
 c_{\lambda,\mu}^{\nu^\vee}
 =
 \sum_{\mathbf I}
 \frac{\prod_{\varepsilon\in\mathsf E_{\mathbf I}}\varepsilon}
 {\prod_{\tau\in\mathsf T_{\mathbf I}}\tau},
\end{equation}
where the sum and weight multisets are those of Theorem~\ref{thm:quiver-localization} for the associated triple-flag quiver.
\end{corollary}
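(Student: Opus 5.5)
This is the case $d=0$ of Theorem~\ref{thm:quiver-localization}, combined with the Derksen--Weyman identification of Littlewood--Richardson coefficients with generic fiber counts for triple-flag quivers \cite{DW}. The plan has three steps.

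\emph{Setting up the quiver data.} I would recall the data of \cite{DW}. The quiver $Q$ is the triple-flag quiver: a central vertex with $\alpha=n$, and three type-$A$ arms. The dimension vectors $\alpha,\beta$ along the arms are chosen so that a general $\alpha$-dimensional representation is a triple of complete flags in $\CC^n$. A $\beta$-dimensional subrepresentation is then an $r$-plane $W\subseteq\CC^n$ meeting the three flags in dimensions prescribed by $\lambda,\mu,\nu$. Consequently $\Gr_\beta(V)$ is the intersection of three Schubert varieties in general position in $\Gr(r,n)$. The condition $|\lambda|+|\mu|+|\nu|=r(n-r)$ should translate into $d=\langle\beta,\gamma\rangle=0$. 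I would check this by direct computation of the Euler form, or by citing \cite{DW}.

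\emph{Applying the localization formula.} Two cases arise.
\begin{itemize}
\item If $\ext_Q(\beta,\gamma)=0$, Proposition~\ref{prop:generic-smooth} applies with $d=0$. The generic fiber is a finite reduced set of points, so its Euler characteristic is its cardinality. By Kleiman transversality together with classical Schubert calculus, this cardinality equals $\int_{\Gr(r,n)}\sigma_\lambda\sigma_\mu\sigma_\nu=c_{\lambda,\mu}^{\nu^\vee}$; alternatively, this identification is the content of \cite{DW}, or of \cite{DSW} in the $d=0$ case. Since $\mathsf H_0(\mathbf I)=1$ (Remark~\ref{rem:localization-computation}), Theorem~\ref{thm:quiver-localization} reduces to \eqref{eq:LR-localization}.
\item If $\ext_Q(\beta,\gamma)\neq0$, the map $q$ is not dominant and the generic fiber is empty, so the Littlewood--Richardson coefficient is $0$. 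Theorem~\ref{thm:quiver-localization} is not stated in this case, so I would argue directly. The right-hand side of \eqref{eq:LR-localization} is, by the same Atiyah--Bott computation as in the proof of Theorem~\ref{thm:quiver-localization}, equal to $\int_X c_r(\cE)$; this step needs no hypothesis. The section $s_V$ has empty zero locus for general $V$. A nowhere-vanishing section forces $c_r(\cE)=0$ because $r$ is the rank, so the integral vanishes and the formula still holds.
\end{itemize}

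\emph{Main obstacle.} The delicate point is matching the combinatorial conventions: the orientation of the arms, which flag and partition go to which arm, and the dual $\nu^\vee$. These must be fixed so that the fiber count is exactly $c_{\lambda,\mu}^{\nu^\vee}$ and $d=0$ holds under the stated degree condition. I would take these conventions directly from \cite{DW} rather than re-deriving them.
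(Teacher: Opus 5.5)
Your argument is correct, but it takes a different route from the paper's. The paper never uses the Euler characteristic or Theorem~\ref{thm:quiver-localization}. It notes that $\rk\cE=\dim X$ for the triple-flag data, and cites the Schubert-calculus computation of Derksen--Schofield--Weyman for $\int_X c_{\rk\cE}(\cE)=c_{\lambda,\mu}^{\nu^\vee}$. That is an identity of intersection numbers and holds whether or not $q$ is dominant. The paper then applies Atiyah--Bott localization directly to $c^{\mathbb T}_{\rk\cE}(\cE)$, so no case split arises and the dominance hypothesis of Theorem~\ref{thm:quiver-localization} never enters.

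You instead prove $\int_X c_r(\cE)=c_{\lambda,\mu}^{\nu^\vee}$ geometrically. In the dominant case you use Proposition~\ref{prop:generic-smooth} together with Kleiman's count; otherwise you use the nowhere-vanishing section. This makes the argument independent of the DSW Schubert computation, at the cost of the dominance split. Your second-case observation, that the right-hand side equals $\int_X c_r(\cE)$ with no hypothesis, applies equally in the first case. So you can drop Theorem~\ref{thm:quiver-localization} and the Euler characteristic altogether and compare $\int_X c_r(\cE)$ with the point count directly.

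One step should be stated more carefully: $\Gr_\beta(V)$ is not literally the triple Schubert intersection in $\Gr(r,n)$. A point of $\Gr_\beta(V)$ also records arm subspaces $U_{i,k}\subseteq F_{i,k}\cap W$, where $F_{i,\bullet}$ is the $i$-th flag. The projection $(W,U)\mapsto W$ maps $\Gr_\beta(V)$ onto the intersection. Its fiber is a single point over each $W$ lying in all three open Schubert cells; elsewhere the fibers can be positive-dimensional. For general flags with $d=0$, Kleiman's theorem applied to the boundary Schubert varieties places every point of the intersection in those open cells, so the two counts agree. Surjectivity of the projection gives the equivalence of emptiness that your non-dominant case needs to conclude $c_{\lambda,\mu}^{\nu^\vee}=0$.
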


\begin{proof}
For the triple-flag data, $\rk\cE=\dim X$. The Schubert-calculus computation of Derksen--Schofield--Weyman identifies
$\int_X c_{\rk\cE}(\cE)=c_{\lambda,\mu}^{\nu^\vee}$;
see the construction in \cite[Section~1]{DSW} and \cite{DW}. 
Applying equivariant localization directly to $c_{\rk\cE}^{\mathbb T}(\cE)$ gives \eqref{eq:LR-localization}.
\end{proof}

\begin{remark}
Corollary~\ref{cor:triple-flag-localization} gives a finite fixed-point formula for $c_{\lambda,\mu}^{\nu^\vee}$. Wen realizes the corresponding invariant spaces as spaces of parabolic theta functions and derives a Verlinde formula \cite{Wen}. Rietsch obtains a different roots-of-unity expression from the degree-zero Bertram--Vafa--Intriligator formula \cite{R}. It would be interesting to compare these finite expressions.
\end{remark}

The same argument computes every Chern number of the generic quiver Grassmannian.

\begin{corollary}\label{cor:chern-numbers}
Let $P(c_1,\ldots,c_d)$ be a polynomial with integral coefficients, homogeneous of weighted degree $d$, where $c_i$ has weight $i$. Then
\begin{equation}\label{eq:general-chern-number}
\begin{aligned}
&\int_{\Gr_\beta(V)}P\bigl(c_1(T_{\Gr_\beta(V)}),\ldots,c_d(T_{\Gr_\beta(V)})\bigr)=\int_X c_r(\cE)\,
P\bigl(c_1(T_X-\cE),\ldots,c_d(T_X-\cE)\bigr),
\end{aligned}
\end{equation}
where the Chern classes of the virtual bundle $T_X-\cE$ are defined by
$c(T_X-\cE)=c(T_X)/c(\cE)$.
\end{corollary}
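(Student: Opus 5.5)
The argument will mirror the proof of Theorem~\ref{thm:GB}, with $c_d$ replaced by an arbitrary weighted-homogeneous polynomial $P$. Throughout, $V$ is taken in the open set $U$ of Proposition~\ref{prop:generic-smooth}, so that $\Gr_\beta(V)$ is smooth of dimension $d$ and $s_V$ is regular.

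The first step is to identify the Chern classes of the tangent bundle. From the normal sequence \eqref{eq:normal-sequence} and the Whitney sum formula we get \eqref{eq:virtual-tangent}, namely $c(T_{\Gr_\beta(V)})=i_V^*\bigl(c(T_X)/c(\cE)\bigr)$. Taking graded pieces, this gives $c_k(T_{\Gr_\beta(V)})=i_V^*c_k(T_X-\cE)$ for each $k$, where $c_k(T_X-\cE)$ is the codimension-$k$ part of $c(T_X)/c(\cE)$. This quotient is well defined because $c(\cE)$ is a unit in $A^*(X)$, its degree-zero term being $1$.

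The second step is to pass $P$ through the pullback. Since $i_V^*$ is a ring homomorphism on Chow rings, we have $P(c_1(T_{\Gr_\beta(V)}),\ldots)=i_V^*P(c_1(T_X-\cE),\ldots)$. Because $P$ is homogeneous of weighted degree $d$, both sides lie in codimension $d$. On $\Gr_\beta(V)$ this is the top degree, so the integral is simply the degree of the zero-cycle. The projection formula then gives
\[\int_{\Gr_\beta(V)} i_V^*P(\cdots)=\int_X P(c_\bullet(T_X-\cE))\cap (i_V)_*[\Gr_\beta(V)].\]
Substituting \eqref{eq:fundamental-class}, $(i_V)_*[\Gr_\beta(V)]=c_r(\cE)\cap[X]$, yields \eqref{eq:general-chern-number}.

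The only point that needs any care is bookkeeping rather than a genuine obstacle. We must check that the operations used (ring homomorphism, projection formula) apply to inhomogeneous classes. We must also check that only the degree-$d$ component survives, so that the formula is unaffected if $P$ is evaluated on classes that are not homogeneous. Homogeneity of $P$ guarantees both. Finally, the case $P=c_d$ recovers Theorem~\ref{thm:GB}, which serves as a consistency check.
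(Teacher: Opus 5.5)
Your proposal is correct and follows the paper's own proof, which simply says to apply the normal sequence \eqref{eq:normal-sequence}, the projection formula, and the fundamental-class identity \eqref{eq:fundamental-class} exactly as in the proof of Theorem~\ref{thm:GB}. Your use of the ring-homomorphism property of $i_V^*$ and the homogeneity of $P$ is the right bookkeeping for replacing $c_d$ by a general $P$.
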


\begin{proof}
Apply \eqref{eq:normal-sequence}, the projection formula, and \eqref{eq:fundamental-class} exactly as in the proof of Theorem~\ref{thm:GB}.
\end{proof}

\section{From Schubert coefficients to semi-invariants}
Keep the notation of Sections~2 and~3. Throughout this section, assume that $Q$ has no oriented cycles, that $q$ is dominant, and that $V$ is general as in Proposition~\ref{prop:generic-smooth}.
For each $x\in Q_0$, let $R_x$ be the $\beta(x)\times\gamma(x)$ rectangle. A partition
\[\mu(x)=(\mu_1(x)\geq\cdots\geq\mu_{\beta(x)}(x)\geq0)\]
is understood to lie in $R_x$. Let $\sigma_{\mu(x)}\in A^{|\mu(x)|}(\Gr(\beta(x),\alpha(x)))$ be the corresponding Schubert class. For a tuple $\bmu=(\mu(x))_{x\in Q_0}$, set
\[
|\bmu|=\sum_{x\in Q_0}|\mu(x)|,\qquad
\sigma_{\bmu}=\prod_{x\in Q_0}\sigma_{\mu(x)}.
\]
The complementary partition in $R_x$ is
\[\mu(x)^{\comp}=
\bigl(\gamma(x)-\mu_{\beta(x)}(x),\ldots,
\gamma(x)-\mu_1(x)\bigr).
\]
Then $\sigma_{\mu(x)^{\comp}}$ is Poincar\'e dual to $\sigma_{\mu(x)}$, and therefore
\begin{equation}\label{eq:duality}
\int_X\sigma_{\bmu^{\comp}}\sigma_{\bnu}
=\delta_{\bmu,\bnu}.
\end{equation}

We henceforth write $[\Gr_\beta(V)]$ for its pushforward to $A^r(X)$. Since $\Gr_\beta(V)$ has dimension $d$, this class has a unique expansion
\begin{equation}\label{eq:fiber-schubert-expansion}
[\Gr_\beta(V)]=
\sum_{|\bmu|=d}N_{\bmu}(\beta,\alpha)\,
\sigma_{\bmu^{\comp}}.
\end{equation}
Crawley-Boevey computes this class by expanding $c_r(\cE)$ in the Schubert basis \cite{CB}. Derksen--Schofield--Weyman identify the coefficients in \eqref{eq:fiber-schubert-expansion} with covariant multiplicities \cite[Section~6]{DSW}.

To state their result, let $W_x$ be a vector space of dimension $\gamma(x)$ and put
\[G_\gamma=\prod_{x\in Q_0}\GL(W_x).\]
The character $\sigma_\beta=\langle\beta,\cdot\rangle\in\ZZ^{Q_0}$ has components
\[\sigma_\beta(x)=\beta(x)-\sum_{\substack{a\in Q_1\\h(a)=x}}\beta(t(a)).\]
We write
\[{\textstyle\det^{\sigma_\beta}}=\bigotimes_{x\in Q_0}(\det W_x)^{\sigma_\beta(x)}.\]
For a partition $\lambda$, $\Schur_\lambda$ denotes the Schur functor. Define
\begin{equation}\label{eq:Mmu}
M_{\bmu}(\beta,\alpha)=
\dim\Hom_{G_\gamma}\!\left(
{\textstyle\det^{\sigma_\beta}},
\CC[\Rep(Q,\gamma)]\otimes
\bigotimes_{x\in Q_0}\Schur_{\mu(x)'}W_x
\right),
\end{equation}
where $\mu(x)'$ is the conjugate partition.

\begin{proposition}[Derksen--Schofield--Weyman]\label{prop:DSW-covariant}
For every tuple $\bmu$ with $|\bmu|=d$, we have
\[N_{\bmu}(\beta,\alpha)=M_{\bmu}(\beta,\alpha).\]
\end{proposition}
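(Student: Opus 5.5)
We must show that the Schubert coefficients of $[\Gr_\beta(V)]$ equal the covariant multiplicities of \eqref{eq:Mmu}. The strategy is to compute $N_{\bmu}$ as a point count for a finite generic quiver Grassmannian of a flag-extended quiver, and to identify that count with a multiplicity via the $d=0$ theorem of Derksen--Schofield--Weyman.

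First, by \eqref{eq:duality} and \eqref{eq:fiber-schubert-expansion},
\[
N_{\bmu}(\beta,\alpha)=\int_X[\Gr_\beta(V)]\,\sigma_{\bmu}.
\]
Represent $\sigma_{\mu(x)}$ by a Schubert variety $\Omega_{\mu(x)}(F_x)$ in $\Gr(\beta(x),\alpha(x))$, defined relative to a general complete flag $F_x$ of $\CC^{\alpha(x)}$. By Kleiman transversality for the transitive action of $\prod_x\GL_{\alpha(x)}$ on $X$, the intersection $\Gr_\beta(V)\cap\prod_x\Omega_{\mu(x)}(F_x)$ is a finite reduced set of points, and $N_{\bmu}$ is its cardinality.

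Next, encode the incidence conditions $\dim(U(x)\cap F_{x,k})\ge\cdots$ quiver-theoretically. At each vertex $x$, attach a type-$A$ arm of subspaces $F_{x,k}\hookrightarrow F_{x,k+1}\hookrightarrow\CC^{\alpha(x)}$. Only the jump positions dictated by $\mu(x)$ matter, and this is exactly the flag extension $\widehat Q$ of \cite{DSW}.

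A point $U$ of the intersection above then corresponds to a subrepresentation of the extended representation $\widehat V=(V,F)$. Its dimension vector $\widehat\beta$ is $\beta$ on $Q_0$ and, on the arm vertices, equals the prescribed intersection dimensions $\dim(U(x)\cap F_{x,k})$ read off from $\mu(x)$. The key check is that a general $U$ in the Schubert cell has intersection dimensions exactly these values. With this choice, the quotient dimension vector $\widehat\gamma$ on the arms is determined by $\gamma$ and $\mu(x)$, so that $\langle\widehat\beta,\widehat\gamma\rangle=d-|\bmu|=0$. Generic $(V,F)$ is a general point of $\Rep(\widehat Q,\widehat\alpha)$, and the zero-dimensional DSW theorem applies. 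The conditions hold because the intersection is nonempty-finite or empty consistently. In the empty case both sides vanish, and one should verify that $\ext(\widehat\beta,\widehat\gamma)=0$ fails or the multiplicity is zero. This gives
\[
N_{\bmu}=\dim\SI(\widehat Q,\widehat\gamma)_{\langle\widehat\beta,\cdot\rangle}.
\]

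Finally, compute this semi-invariant space by first integrating out the arms. Reflection functors, or equivalently the Cauchy formula for $\CC[\Hom(\CC^{k},W)]$ along a type-$A$ arm, turn the $\GL$-invariants along the arm at $x$ into a Schur functor $\Schur_{\lambda}W_x$. One must track conventions carefully here, and I expect the correct partition to be $\lambda=\mu(x)'$, with the remaining determinant twist equal to $\det^{\sigma_\beta}$. This turns the semi-invariant space into the covariant space \eqref{eq:Mmu}.

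The main obstacle is this last bookkeeping step. One has to match the arm dimension vector and weight with $\mu(x)$, and in particular get the conjugation $\mu(x)'$ and the twist $\det^{\sigma_\beta}$ exactly right; these depend on the orientation of the arms and on the sub/quotient conventions. I would fix these by checking the case of a single vertex, where $Q$ has no arrows and $N_{\bmu}$ must be $\delta_{\bmu,\mathbf 0}$. I would also check the triple-flag case against Littlewood--Richardson coefficients.
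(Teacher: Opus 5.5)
Your route is different from the paper's. The paper, following \cite[Proposition~9]{DSW}, proves $N_{\bmu}=M_{\bmu}$ algebraically. It expands Crawley-Boevey's class $c_r(\cE)$ in the Schubert basis through the Cauchy/Littlewood--Richardson expansion of the top Chern classes of the bundles $\cS_{t(a)}^\vee\otimes\cQ_{h(a)}$. It then matches those coefficients against the multiplicity of $\det^{\sigma_\beta}$ in $\CC[\Rep(Q,\gamma)]\otimes\bigotimes_x\Schur_{\mu(x)'}W_x$. Covariants appear directly, with no transversality, no arms and no appeal to the zero-dimensional theorem.

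You instead cut $\Gr_\beta(V)$ by general Schubert varieties, encode the Schubert conditions by flag arms, and apply the zero-dimensional theorem. You must cite that theorem from \cite{DSW} itself, not from Remark~\ref{rem:zero-dimensional}, which derives it from the proposition you are proving. The reduction is sound, and it explains why flag extensions occur. One correction: the fact you need is not about ``a general $U$ in the cell''. It is that, for general flags, the translated Schubert boundaries miss $\Gr_\beta(V)$ by dimension count. So every intersection point lies in the open cells, and there the lift to a subrepresentation is unique.

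The gap is the final step, which you leave as an expectation. Identifying the arm semi-invariants with $\Schur_{\mu(x)'}W_x$ twisted by $\det^{\sigma_\beta}$ is not bookkeeping; it is the content of \cite[Lemmas~10 and~11]{DSW]}. Moreover, your inward subspace arms with $\bmu$-dependent vertices are not the paper's $\widehat Q$. The paper's arms point out of $x$, and its $\widehat\gamma$ is independent of $\bmu$. So Proposition~\ref{prop:flag-extension} cannot be quoted for your arms.

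The repair is to use the paper's quotient arms. Take general surjections $\CC^{\alpha(x)}\twoheadrightarrow\CC^{\widehat\alpha(y_{1,x})}\twoheadrightarrow\cdots$ with $\widehat\alpha=\widehat\beta_{\bmu}+\widehat\gamma$. Let $K_{i,x}$ be the kernel of the composite to $y_{i,x}$, and put $c=\gamma(x)-i+1$. Then:
\begin{itemize}
\item $\dim K_{i,x}=\mu(x)'_c+\gamma(x)-c$;
\item any $\widehat\beta_{\bmu}$-dimensional subrepresentation over $U$ contains the image of $U(x)$ at $y_{i,x}$, which forces $\dim(U(x)\cap K_{i,x})\geq\mu(x)'_c$;
\item these column conditions cut out exactly $\Omega_{\mu(x)}$;
\item over the open cell the subrepresentation exists and is unique, namely the images of $U(x)$.
\end{itemize}
Your argument then gives $N_{\bmu}=\dim\SI(\widehat Q,\widehat\gamma)_{\widehat\sigma_{\bmu}}$ directly, which is already enough for Theorem~\ref{thm:main}. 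Proposition~\ref{prop:flag-extension} then turns this into $M_{\bmu}$ with no conventions left to fix.

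Finally, your single-vertex sanity check is misstated. There $|\bmu|=d=\beta\gamma$, and $N_{\bmu}=1$ only for the full rectangle, i.e.\ $\bmu^{\comp}=\varnothing$, not for $\bmu=\mathbf 0$.
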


\begin{proof}
This is Proposition~9 of \cite{DSW}, translated into the dual Schubert-basis convention of \eqref{eq:fiber-schubert-expansion}. Their proof compares the Littlewood--Richardson expansion of Crawley-Boevey's cycle formula with the Cauchy decomposition of $\CC[\Rep(Q,\gamma)]$ and the corresponding multiplicity of the determinant character.
\end{proof}

Define the degree-$d$ virtual tangent class
\begin{equation}\label{eq:Theta}
\Theta_d(Q,\beta,\gamma):=
\left[\frac{c(T_X)}{c(\cE)}\right]_d\in A^d(X).
\end{equation}
Expand it in the Schubert basis:
\begin{equation}\label{eq:a-mu}
\Theta_d(Q,\beta,\gamma)=
\sum_{|\bmu|=d}a_{\bmu}(Q,\beta,\gamma)\, \sigma_{\bmu}.
\end{equation}
The coefficients $a_{\bmu}$ are integers, since inverse Chern classes are integral Segre classes and the Schubert basis is integral.

\begin{theorem}[Euler characteristic from covariants]\label{thm:covariant-euler}
Under the standing assumptions,
\begin{equation}\label{eq:covariant-euler}
\topchi\bigl(\Gr_\beta(V)\bigr)=\sum_{|\bmu|=d} a_{\bmu}(Q,\beta,\gamma)\, M_{\bmu}(\beta,\alpha).
\end{equation}
\end{theorem}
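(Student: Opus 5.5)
The plan is to combine Theorem~\ref{thm:GB} with the Schubert expansions \eqref{eq:fiber-schubert-expansion} and \eqref{eq:a-mu}, and then use Poincar\'e duality \eqref{eq:duality} and Proposition~\ref{prop:DSW-covariant}. No new geometry is needed; the argument is bookkeeping in $A^*(X)$.

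First, by Theorem~\ref{thm:GB} and Proposition~\ref{prop:generic-smooth}, the Euler characteristic equals $\int_X c_r(\cE)\,\Theta_d(Q,\beta,\gamma)$. By \eqref{eq:fundamental-class}, $c_r(\cE)\cap[X]$ is the pushforward $[\Gr_\beta(V)]\in A^r(X)$. We therefore substitute its expansion \eqref{eq:fiber-schubert-expansion}:
\[\topchi(\Gr_\beta(V))=\int_X\Bigl(\sum_{|\bnu|=d}N_{\bnu}(\beta,\alpha)\,\sigma_{\bnu^{\comp}}\Bigr)\Bigl(\sum_{|\bmu|=d}a_{\bmu}(Q,\beta,\gamma)\,\sigma_{\bmu}\Bigr).\]
This requires checking degrees. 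The class $\sigma_{\bnu^{\comp}}$ has codimension $\sum_x\beta(x)\gamma(x)-|\bnu|=\dim X-d=r$, which matches $c_r(\cE)$. Also, $\sigma_{\bmu}$ has codimension $d$, so the product lies in top degree.

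Second, we expand bilinearly and apply \eqref{eq:duality}. This gives $\int_X\sigma_{\bnu^{\comp}}\sigma_{\bmu}=\delta_{\bmu,\bnu}$, so the double sum collapses to $\sum_{|\bmu|=d}a_{\bmu}N_{\bmu}(\beta,\alpha)$. Proposition~\ref{prop:DSW-covariant} then replaces $N_{\bmu}$ by $M_{\bmu}$, which is \eqref{eq:covariant-euler}.

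The only delicate point is consistency of conventions. The duality relation is stated as $\int\sigma_{\bmu^{\comp}}\sigma_{\bnu}=\delta$. Since the product of Schubert classes is commutative and the relation is symmetric under $\bmu\leftrightarrow\bmu^{\comp}$ within each rectangle $R_x$, the pairing of $\sigma_{\bnu^{\comp}}$ with $\sigma_{\bmu}$ is the same Kronecker delta. The relation follows from the factorwise duality on each $\Gr(\beta(x),\alpha(x))$ and the K\"unneth decomposition of $X$. The remaining convention to check is that the Schubert classes indexing $N_{\bmu}$ are the same as those used in \eqref{eq:Mmu}, namely through conjugate partitions $\mu(x)'$. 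This is exactly the content of Proposition~\ref{prop:DSW-covariant} as stated, so we can take it as given.
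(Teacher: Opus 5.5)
Your proposal is correct and is essentially the paper's proof: rewrite the integral of Theorem~\ref{thm:GB} as $\int_X[\Gr_\beta(V)]\,\Theta_d$, substitute the two Schubert expansions, collapse the double sum with \eqref{eq:duality}, and replace $N_{\bmu}$ by $M_{\bmu}$ via Proposition~\ref{prop:DSW-covariant}. Your explicit codimension check is a harmless addition, and the symmetry remark is unnecessary, since \eqref{eq:duality} applies directly after renaming indices.
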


\begin{proof}
By Theorem~\ref{thm:GB}, \eqref{eq:fiber-schubert-expansion}, and \eqref{eq:a-mu},
\[\topchi(\Gr_\beta(V))=\int_X[\Gr_\beta(V)]\Theta_d=\sum_{|\bmu|=d}\sum_{|\bnu|=d}N_{\bmu}a_{\bnu}\int_X\sigma_{\bmu^{\comp}}\sigma_{\bnu}.\]
The duality relation \eqref{eq:duality} reduces this to
\[\topchi(\Gr_\beta(V))=\sum_{|\bmu|=d}a_{\bmu}N_{\bmu}.\]
Now apply Proposition~\ref{prop:DSW-covariant}.
\end{proof}

\medskip

\noindent\emph{Passage to ordinary semi-invariants.}\quad
The multiplicities in \eqref{eq:Mmu} are covariants rather than ordinary semi-invariant weight spaces on $\Rep(Q,\gamma)$. The flag-arm construction of \cite[Lemmas~10 and~11]{DSW} converts them into ordinary semi-invariants for an enlarged quiver. We spell out the construction because it gives the most direct answer to the positive-dimensional case.

Attach to each $x\in Q_0$ an oriented arm
\begin{equation}\label{eq:arm}
 x=y_{0,x}\longrightarrow y_{1,x}\longrightarrow\cdots
 \longrightarrow y_{\gamma(x),x}.
\end{equation}
Let $\widehat{Q}$ denote the resulting quiver. Its underlying directed graph depends only on $Q$ and $\gamma$. Define a dimension vector $\widehat\gamma$ by
\begin{equation}\label{eq:gammahat}
\widehat\gamma(x)=\gamma(x),
\qquad
\widehat\gamma(y_{i,x})=\gamma(x)-i+1
\quad(1\leq i\leq\gamma(x)).
\end{equation}

Fix a tuple $\bmu$ with $|\bmu|=d$. Write the complementary partition at $x$ in row-multiplicity notation as
\[\mu(x)^{\comp}=\bigl(\gamma(x)^{b_1(x)},
(\gamma(x)-1)^{b_2(x)},\ldots, 1^{b_{\gamma(x)}(x)}\bigr).
\]
Zero rows are omitted, so $\sum_j b_j(x)\leq\beta(x)$. Define a dimension vector $\widehat\beta_{\bmu}$ on $\widehat{Q}$ by
\[
\widehat\beta_{\bmu}(x)=\beta(x),
\qquad
\widehat\beta_{\bmu}(y_{i,x})=
\sum_{j=1}^{\gamma(x)-i+1}b_j(x).
\]
The original quiver contributes $d$ to the Euler pairing, while the arm at $x$ contributes
\[
-\beta(x)\gamma(x)+
\sum_{i=1}^{\gamma(x)}\widehat\beta_{\bmu}(y_{i,x})
=-|\mu(x)|.
\]
Consequently,
\[\langle\widehat\beta_{\bmu},\widehat\gamma\rangle=d-|\bmu|=0.\]
Finally put
\[\widehat\sigma_{\bmu}=
\langle\widehat\beta_{\bmu},\cdot\rangle \in\ZZ^{\widehat Q_0}.\]

\begin{proposition}[{\cite[Lemma~11]{DSW}}]\label{prop:flag-extension}
For every $\bmu$ with $|\bmu|=d$, we have
\[
M_{\bmu}(\beta,\alpha)
=\dim\SI(\widehat Q,\widehat\gamma)_{\widehat\sigma_{\bmu}}.
\]
\end{proposition}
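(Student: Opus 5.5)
The plan is to compute $\SI(\widehat Q,\widehat\gamma)_{\widehat\sigma_{\bmu}}$ by first taking semi-invariants along the arms and only afterwards along $Q$. Write $G_{\widehat\gamma}=G_\gamma\times\prod_{x}\prod_{i\geq1}\GL(\widehat\gamma(y_{i,x}))$. The representation space splits as
\[\Rep(\widehat Q,\widehat\gamma)=\Rep(Q,\gamma)\times\prod_{x\in Q_0}A_x,\qquad A_x=\prod_{i=1}^{\gamma(x)}\Hom\bigl(\CC^{\widehat\gamma(y_{i-1,x})},\CC^{\widehat\gamma(y_{i,x})}\bigr),\]
with $\widehat\gamma(y_{0,x})=\gamma(x)$. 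Taking invariants in stages (arm groups first, then $G_\gamma$) gives
\[\SI(\widehat Q,\widehat\gamma)_{\widehat\sigma_{\bmu}}\cong\Hom_{G_\gamma}\Bigl(\chi_0,\;\CC[\Rep(Q,\gamma)]\otimes\bigotimes_x \CC[A_x]^{(\text{arm weights at }x)}\Bigr).\]
Here $\chi_0$ is the restriction of $\widehat\sigma_{\bmu}$ to $Q_0$.

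First I would read off the weights. Since every arm arrow leaves $x$, the incoming arrows at $x\in Q_0$ are those of $Q$, so $\widehat\sigma_{\bmu}(x)=\sigma_\beta(x)$ and $\chi_0=\det^{\sigma_\beta}$, matching \eqref{eq:Mmu}. At the arm vertices,
\[\widehat\sigma_{\bmu}(y_{i,x})=\widehat\beta_{\bmu}(y_{i,x})-\widehat\beta_{\bmu}(y_{i-1,x}),\]
and these are nonpositive. Explicitly, at $y_{1,x}$ the value is minus the number of zero rows of $\mu(x)^{\comp}$, and for $i\geq2$ it is $-b_{\gamma(x)-i+2}(x)$.

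The second step is to identify the arm factor as a $\GL(W_x)$-module. The arm is an equioriented $A$-type quiver with strictly decreasing dimensions $\gamma,\gamma,\gamma-1,\ldots,1$ after the source. By the first fundamental theorem, or by Borel--Weil on the flag variety of $W_x$ (the generic arm representation is a full flag of quotients of $W_x$ via the successive surjections), the semi-invariants of fixed arm weights form a single irreducible module. Concretely, the composite maps $W_x\to\CC^{\gamma(x)-i+1}$ give $\det$ of the induced map on $\bigwedge^{\gamma(x)-i+1}$. Hence the weight-$(-k_1,\ldots,-k_{\gamma(x)})$ part is spanned by products of these minors, i.e.
\[\bigotimes_i\bigl(\textstyle\bigwedge^{\gamma(x)-i+1}W_x\bigr)^{\otimes k_i}\]
projected to its Cartan component. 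That component is $\Schur_\lambda W_x$, where $\lambda$ has $k_i$ columns of height $\gamma(x)-i+1$. Using the row multiplicities $b_j(x)$, $\lambda$ has $b_j(x)$ columns of height $\gamma(x)-j+1$ for $j\geq2$, and $(\beta-\sum b)$ columns of height $\gamma(x)$. The $\det$-factor from the first arm arrow accounts for the full columns.

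The third step, which I expect to be the main obstacle, is the bookkeeping. I have to check that this $\lambda$, possibly after dualizing $W_x$ and twisting by a power of $\det W_x$, is exactly $\mu(x)'$. Compatibility with the determinant twist is consistent with $\langle\widehat\beta_{\bmu},\widehat\gamma\rangle=0$, which encodes $|\bmu|=d$. The columns of $\lambda$ should be the rows of $\mu(x)$ read through complementation in $R_x$: a row of $\mu(x)^{\comp}$ of length $\gamma-j+1$ corresponds to a row of $\mu(x)$ of length $j-1$. Transposing then gives $\Schur_{\mu(x)'}$ up to the full-column $\det$ powers. Those powers are moved to $\chi_0$ and must cancel, since $\chi_0$ was already seen to equal $\det^{\sigma_\beta}$.

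I would first verify the dictionary on a single vertex with no arrows, where both sides reduce to Borel--Weil on $\Gr(\beta,\alpha)$. I would then fix the dual versus non-dual convention to agree with the translation in Proposition~\ref{prop:DSW-covariant}, citing \cite[Lemmas~10 and~11]{DSW} for the same identification. Assembling the three steps yields exactly \eqref{eq:Mmu}.
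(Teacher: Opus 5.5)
The paper gives no argument of its own beyond citing \cite[Lemma~11]{DSW}. Your staged computation is the natural direct proof of that lemma: take arm semi-invariants first to get a Schur module of $W_x$, then take the $\det^{\sigma_\beta}$-isotypic part under $G_\gamma$. Several parts are correct: your first step, including $\chi_0=\det^{\sigma_\beta}$; the list of arm weights; and the identification of the arm factor as the Cartan component of $\bigotimes_i(\bigwedge^{\gamma(x)-i+1}W_x)^{\otimes k_i}$.

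The gap is in the bookkeeping, and as written your third step does not close. Substitute $k_i=b_{\gamma(x)-i+2}(x)$ into ``$k_i$ columns of height $\gamma(x)-i+1$'' and put $j=\gamma(x)-i+2$. This gives $b_j(x)$ columns of height $j-1$, not $\gamma(x)-j+1$. A row of $\mu(x)^{\comp}$ of length $\gamma(x)-j+1$ is a row of $\mu(x)$ of length $j-1$. The $\beta(x)-\sum_j b_j(x)$ zero rows of $\mu(x)^{\comp}$ are exactly the rows of $\mu(x)$ of full length $\gamma(x)$. So the correct dictionary is: for every $1\le i\le\gamma(x)$, the weight at $y_{i,x}$ is minus the number of rows of $\mu(x)$ of length exactly $\gamma(x)-i+1$. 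The column heights of $\lambda$ are therefore precisely the row lengths of $\mu(x)$, so $\lambda=\mu(x)'$ on the nose. The module is $\Schur_{\mu(x)'}W_x$ itself, not its dual, since the minors of a map $W_x\to\CC^{h}$, as functions, span $\bigwedge^hW_x$.

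Consequently there is nothing to dualize and no determinant twist to absorb. Your plan to move the full-column powers of $\det W_x$ into $\chi_0$ and have them ``cancel'' is wrong. Those powers come from the determinant of the first arm map $W_x\to\CC^{\gamma(x)}$; they are the full columns of $\mu(x)'$ and belong in $\Schur_{\mu(x)'}W_x$. Moving them into the character would replace $\det^{\sigma_\beta}$ by a different character and contradict your own first step. Once the index is fixed, the comparison with \eqref{eq:Mmu} is immediate. The final appeal to \cite[Lemmas~10 and~11]{DSW} is then unnecessary, and as a proof step it would be circular.

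For the multiplicity-one claim, the cleanest justification is to iterate Cauchy's formula along the arm. If $V$ is the space at $y_{i,x}$, then $\Hom_{\GL(V)}\bigl(\det^{-k_i},\Schur_\nu V^\vee\otimes\Schur_{\nu'}V\bigr)$ is one-dimensional when $\nu=\nu'+(k_i^{\dim V})$ and zero otherwise. Hence the partition reaching $W_x$ has exactly $k_i$ columns of height $\gamma(x)-i+1$. If you prefer Borel--Weil, note that the open set of surjective chains is a torsor over the full flag variety of $W_x$. Its complement is the divisor where the first arm map is singular, plus loci of codimension two. So the polynomial semi-invariants of the given weight form a nonzero submodule of an irreducible module, and hence equal it.
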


We can now state the main result solely in terms of ordinary semi-invariants.

\begin{theorem}[Main semi-invariant formula]\label{thm:main}
Under the standing assumptions, form the flag extension $(\widehat{Q},\widehat\gamma)$ by \eqref{eq:arm} and \eqref{eq:gammahat}, and define the integers $a_{\bmu}$ by \eqref{eq:Theta}--\eqref{eq:a-mu}. Then, for a general representation $V$ of dimension $\alpha$,
\begin{equation}\label{eq:main-formula}
\topchi\bigl(\Gr_\beta(V)\bigr)=\sum_{|\bmu|=d} a_{\bmu}(Q,\beta,\gamma) \dim\SI(\widehat{Q},\widehat\gamma)_{\widehat\sigma_{\bmu}}.
\end{equation}
\end{theorem}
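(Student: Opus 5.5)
The main formula follows by substituting Proposition~\ref{prop:flag-extension} into Theorem~\ref{thm:covariant-euler}; everything else has already been established.

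First, I would invoke Theorem~\ref{thm:covariant-euler}. Under the standing assumptions ($Q$ acyclic, $q$ dominant, $V$ in the open set $U$ of Proposition~\ref{prop:generic-smooth}), it gives
\[\topchi(\Gr_\beta(V))=\sum_{|\bmu|=d}a_{\bmu}(Q,\beta,\gamma)\,M_{\bmu}(\beta,\alpha).\]
Recall that this theorem rests on three inputs. Theorem~\ref{thm:GB} expresses the Euler characteristic as the pairing of $[\Gr_\beta(V)]$ with $\Theta_d$. The Schubert expansion \eqref{eq:fiber-schubert-expansion} of the fiber class is dual to \eqref{eq:a-mu} via \eqref{eq:duality}. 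Proposition~\ref{prop:DSW-covariant} identifies the coefficients $N_{\bmu}$ with $M_{\bmu}$.

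Second, for each tuple $\bmu$ with $|\bmu|=d$ occurring in the sum, I would replace $M_{\bmu}(\beta,\alpha)$ by $\dim\SI(\widehat Q,\widehat\gamma)_{\widehat\sigma_{\bmu}}$ using Proposition~\ref{prop:flag-extension}. The index sets agree: both sums run over tuples of partitions $\mu(x)$ in the rectangles $R_x$ with $|\bmu|=d$. The flag-extended data $(\widehat Q,\widehat\gamma)$ and the weights $\widehat\sigma_{\bmu}$ are defined in \eqref{eq:arm}, \eqref{eq:gammahat} and through $\widehat\beta_{\bmu}$, exactly for such $\bmu$. Substituting termwise yields \eqref{eq:main-formula}.

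The one point that needs care, and the only place I expect friction, is convention-matching. The exponent pattern $b_j(x)$ in $\widehat\beta_{\bmu}$ is read off from the complementary partition $\mu(x)^{\comp}$, whereas $M_{\bmu}$ involves $\Schur_{\mu(x)'}W_x$. I would check that the covariant multiplicity that Lemma~11 of \cite{DSW} converts into a semi-invariant weight space is the same one indexed in \eqref{eq:Mmu}, so that no extra complement or conjugation is introduced.

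To do this, I would verify the construction on one arm. By Cauchy/Pieri branching along the flag arm $x\to y_{1,x}\to\cdots$ with the dimensions \eqref{eq:gammahat}, the semi-invariants of weight $\widehat\sigma_{\bmu}$ restricted to the arm produce exactly the isotypic component $\Schur_{\mu(x)'}W_x$, twisted by the appropriate power of $\det W_x$. Combined with the weight computation $\langle\widehat\beta_{\bmu},\widehat\gamma\rangle=0$ recorded before the theorem, this confirms that the two indexings coincide and completes the proof.
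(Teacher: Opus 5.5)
Your proposal is correct and is exactly the paper's proof: substitute Proposition~\ref{prop:flag-extension} termwise into Theorem~\ref{thm:covariant-euler}, with both sums running over the same tuples $\bmu$ with $|\bmu|=d$. Your convention check in the last two paragraphs is harmless but not needed. Proposition~\ref{prop:flag-extension} is already stated with precisely the weights $\widehat\sigma_{\bmu}$ and the multiplicities $M_{\bmu}(\beta,\alpha)$ of \eqref{eq:Mmu}, so that check verifies the cited Derksen--Schofield--Weyman lemma rather than supplying a step of this argument.
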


\begin{proof}
Combine Theorem~\ref{thm:covariant-euler} and Proposition~\ref{prop:flag-extension}.
\end{proof}

\begin{remark}[The zero-dimensional case]\label{rem:zero-dimensional}
If $d=0$, then $\Theta_0=1$ and the only tuple is the empty tuple. The tensor factor in \eqref{eq:Mmu} is trivial, so \eqref{eq:covariant-euler} becomes
\[\#\Gr_\beta(V)=\dim\SI(Q,\gamma)_{\langle\beta,\cdot\rangle},\]
which is precisely the zero-dimensional theorem of Derksen--Schofield--Weyman \cite{DSW}.
\end{remark}

Combining Corollary~\ref{cor:chern-numbers} with the same Schubert-duality argument gives the corresponding statement for all Chern numbers.

\begin{corollary}\label{cor:all-chern-semi}
Every Chern number of a generic quiver Grassmannian is a finite integral linear combination of the covariant multiplicities $M_{\bmu}(\beta,\alpha)$, equivalently
$\dim\SI(\widehat Q,\widehat\gamma)_{\widehat\sigma_{\bmu}}$.
The coefficients are the Schubert coefficients of the corresponding characteristic class of the virtual bundle $T_X-\cE$.
\end{corollary}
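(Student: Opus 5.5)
The plan is to repeat the proof of Theorem~\ref{thm:covariant-euler}, replacing the Euler class $c_d$ by an arbitrary Chern polynomial. Fix an integral polynomial $P(c_1,\ldots,c_d)$, homogeneous of weighted degree $d$. Corollary~\ref{cor:chern-numbers} gives
\[\int_{\Gr_\beta(V)}P\bigl(c_\bullet(T_{\Gr_\beta(V)})\bigr)=\int_X [\Gr_\beta(V)]\cdot P\bigl(c_\bullet(T_X-\cE)\bigr).\]
Here we have used \eqref{eq:fundamental-class} to replace $c_r(\cE)$ by the pushed-forward fundamental class. Set
\[\Theta_P:=P\bigl(c_1(T_X-\cE),\ldots,c_d(T_X-\cE)\bigr)\in A^d(X).\]

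The first step is integrality. The classes $c_i(T_X-\cE)$ are the degree-$i$ parts of $c(T_X)\,s(\cE)$, where $s(\cE)=c(\cE)^{-1}$ is the Segre class. The Segre class is an integral polynomial in the Chern classes of $\cE$, and $A^*(X)$ is freely spanned over $\ZZ$ by the product Schubert classes $\sigma_{\bmu}$, since $X$ is a product of Grassmannians. Because $P$ has integer coefficients, we can therefore expand uniquely
\[\Theta_P=\sum_{|\bmu|=d}a^P_{\bmu}\,\sigma_{\bmu},\qquad a^P_{\bmu}\in\ZZ.\]
These $a^P_{\bmu}$ are exactly the Schubert coefficients of the characteristic class described in the statement.

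The second step is to insert the expansion \eqref{eq:fiber-schubert-expansion} of $[\Gr_\beta(V)]$ and apply the duality relation \eqref{eq:duality}. This gives
\[\int_X[\Gr_\beta(V)]\,\Theta_P=\sum_{|\bmu|=d}a^P_{\bmu}\,N_{\bmu}(\beta,\alpha).\]
Proposition~\ref{prop:DSW-covariant} then replaces $N_{\bmu}$ by $M_{\bmu}(\beta,\alpha)$, and Proposition~\ref{prop:flag-extension} replaces $M_{\bmu}$ by $\dim\SI(\widehat Q,\widehat\gamma)_{\widehat\sigma_{\bmu}}$. The sum is finite because only finitely many tuples $\bmu$ fit in the rectangles $R_x$.

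None of these steps is a serious obstacle, since everything reduces to results stated earlier. The only point that needs care is the integrality claim, and specifically that it holds in the Chow ring itself and not merely rationally. This follows because Chern and Segre classes are integral and the Schubert basis is a $\ZZ$-basis of $A^*(X)$. Once that is settled, every Chern number of the generic quiver Grassmannian is an integral linear combination of the $M_{\bmu}(\beta,\alpha)$, as claimed.
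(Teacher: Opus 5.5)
Your proposal is correct and follows essentially the same route as the paper. Both arguments apply Corollary~\ref{cor:chern-numbers} together with \eqref{eq:fundamental-class}, expand the virtual characteristic class in the Schubert basis, pair it via \eqref{eq:duality}, and then invoke Propositions~\ref{prop:DSW-covariant} and~\ref{prop:flag-extension}. Your integrality argument (Segre classes are integral, and the product Schubert classes form a $\ZZ$-basis of $A^*(X)$) simply spells out the justification the paper gives for the $a_{\bmu}$ after \eqref{eq:a-mu}.
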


\begin{proof}
Apply Corollary~\ref{cor:chern-numbers}, expand the characteristic class in the Schubert basis, and use \eqref{eq:duality}, Proposition~\ref{prop:DSW-covariant}, and Proposition~\ref{prop:flag-extension}.
\end{proof}

The determinant descriptions of semi-invariants in \cite{SchofieldSI,DW,SchofieldVdB} provide concrete ways to compute the weight spaces appearing in \eqref{eq:main-formula}.

\section{Generalized Kronecker quivers}\label{sec:kronecker}

Let $K_m$ be the quiver with vertices $1,2$ and $m\geq1$ arrows from $1$ to $2$. Fix
\[\alpha=(n_1,n_2),\qquad\beta=(a,b),\qquad\gamma=(p,q)=(n_1-a,n_2-b),\]
where $0\leq a\leq n_1$ and $0\leq b\leq n_2$. For a general representation $M$ of dimension $\alpha$, write
\[F_M=\Gr_{(a,b)}(M).\]
The ambient space, zero-locus bundle, and expected dimension are
\begin{align}
X&=\Gr(a,n_1)\times\Gr(b,n_2),
&\cE&=(\cS_1^\vee\otimes\cQ_2)^{\oplus m},
\label{eq:kronecker-XE}\\
\delta&=\dim X-\rk\cE
=ap+bq-maq
=\langle\beta,\gamma\rangle.
\label{eq:kronecker-delta}
\end{align}
The natural map $\Rep(K_m,\alpha)\to H^0(X,\cE)$ is an isomorphism when $a>0$ and $q>0$; if $a=0$ or $q=0$, then $\cE=0$ and the map is still surjective. Thus a general representation determines a general section of $\cE$. Its zero locus is empty or smooth of dimension $\delta$. We use the convention $\topchi(\varnothing)=0$.

The most compact all-parameter expression is obtained by integrating in Chern roots. For a formal power series $G$ in variables $x_1,\ldots,x_a,z_1,\ldots,z_q$, brackets denote coefficient extraction in the right-side expansion at the origin.

\begin{proposition}[Chern-root coefficient formula]\label{thm:kronecker-coefficient}
For a general $M\in\Rep(K_m,\alpha)$,
\begin{align}
\topchi(F_M)
={}&\frac{1}{a!\,q!}
\left[
\prod_{i=1}^{a}x_i^{n_1-1}
\prod_{j=1}^{q}z_j^{n_2-1}
\right]
\prod_{i=1}^{a}(1+x_i)^{n_1}
\prod_{j=1}^{q}(1+z_j)^{n_2}
\notag\\
&\quad\cdot
\prod_{\substack{1\leq i,k\leq a\\i\neq k}}
\frac{x_i-x_k}{1+x_i-x_k}
\prod_{\substack{1\leq j,\ell\leq q\\j\neq\ell}}
\frac{z_j-z_\ell}{1+z_j-z_\ell}
\prod_{i=1}^{a}\prod_{j=1}^{q}
\left(\frac{x_i+z_j}{1+x_i+z_j}\right)^m.
\label{eq:kronecker-root-coefficient}
\end{align}
Empty products are one. If $\delta<0$, the indicated coefficient is automatically zero.
\end{proposition}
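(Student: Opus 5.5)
The plan is to specialize Theorem~\ref{thm:GB} to $K_m$ and then evaluate the integral over $X=\Gr(a,n_1)\times\Gr(b,n_2)$ by the standard Chern-root integration formula on Grassmannians. The zero-locus facts come from the discussion right after \eqref{eq:kronecker-delta}: a general $M$ gives a general section of the globally generated bundle $\cE$, whose zero locus is empty or smooth of codimension $r=\rk\cE=maq$. If it is smooth and nonempty, Theorem~\ref{thm:GB} applies verbatim; its proof only uses regularity and \eqref{eq:fundamental-class}. If it is empty, $\cE$ has a nowhere-vanishing section, so $c_r(\cE)=0$ and the integral gives $0=\topchi(\varnothing)$. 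Either way
\[\topchi(F_M)=\int_X c_r(\cE)\,\frac{c(T_X)}{c(\cE)},\]
where I drop the truncation $[\,\cdot\,]_d$ because only the top-degree part of the integrand is integrated.

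Next I choose roots adapted to each factor. On $\Gr(a,n_1)$, let $x_1,\dots,x_a$ be the Chern roots of $\cS_1^\vee$. On $\Gr(b,n_2)$, I use the quotient description: let $z_1,\dots,z_q$ be the Chern roots of $\cQ_2$, identifying $\Gr(b,n_2)\cong\Gr(q,(\CC^{n_2})^\vee)$ so that $\cQ_2^\vee$ is the tautological subbundle. The integration formula I would invoke, for instance via the localization of Theorem~\ref{thm:quiver-localization} applied to a single Grassmannian or via the flag-bundle pushforward, is
\[\int_{\Gr(k,n)} f(y)=\frac{1}{k!}\Bigl[\prod_i y_i^{\,n-1}\Bigr] f(y)\prod_{i\neq l}(y_i-y_l)\]
for a symmetric polynomial $f$ in the roots $y$ of the dual tautological subbundle.

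In these variables the characteristic classes are as follows.
\begin{itemize}
\item The tangent bundle of the first factor satisfies $c(\cS_1^\vee\otimes\cQ_1)=c(\cS_1^\vee)^{\otimes n_1}/c(\cS_1^\vee\otimes\cS_1)$. This equals $\prod_i(1+x_i)^{n_1}/\prod_{i\neq k}(1+x_i-x_k)$, since the diagonal roots of $\cS_1^\vee\otimes\cS_1$ vanish.
\item Symmetrically, $c(\cS_2^\vee\otimes\cQ_2)=\prod_j(1+z_j)^{n_2}/\prod_{j\neq\ell}(1+z_j-z_\ell)$.
\item The bundle $\cE$ has roots $x_i+z_j$, each with multiplicity $m$. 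Hence $c_r(\cE)=\prod_{i,j}(x_i+z_j)^m$ and $c(\cE)=\prod_{i,j}(1+x_i+z_j)^m$.
\end{itemize}
Inserting these into the product of the two integration formulas, and pairing each Vandermonde factor $x_i-x_k$ with $(1+x_i-x_k)^{-1}$ (and likewise for the $z$'s), gives exactly \eqref{eq:kronecker-root-coefficient}. The rational functions are expanded as power series at the origin. This is legitimate because the extraction only sees finitely many degrees, and the resulting expression is a symmetric polynomial truncation, so the integration formula applies.

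Finally, a degree count justifies the dropped truncation and the case $\delta<0$. The extracted monomial has degree $a(n_1-1)+q(n_2-1)=\dim X+a(a-1)+q(q-1)$. The factors $\prod(x_i-x_k)\prod(z_j-z_\ell)\prod(x_i+z_j)^m$ have lowest degree $a(a-1)+q(q-1)+r$. Hence only the degree-$\delta$ part of the remaining series contributes, recovering $[\,\cdot\,]_\delta$. When $\delta<0$ no term has the required degree, so the coefficient vanishes. The cases $a=0$ or $q=0$ reduce to empty products.

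The main obstacle I expect is getting the normalization and signs right in the Grassmannian integration formula. This means checking that the $1/k!$ and the ordered product $\prod_{i\neq l}(y_i-y_l)$, rather than its square root or a signed version, are correct with the dual-subbundle roots and exponent $n-1$. It also means making sure the quotient-root convention on the second factor introduces no sign. I would verify this by localization on $\Gr(1,n)=\mathbb P^{n-1}$ and $\Gr(2,4)$ before trusting the general formula.
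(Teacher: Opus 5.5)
Your proposal is correct and follows the paper's proof essentially step by step: the empty-versus-regular dichotomy for a general section of the globally generated bundle $\cE$, the Chern roots $x_i$ of $\cS_1^\vee$ and $z_j$ of $\cQ_2$ via $\Gr(b,n_2)\simeq\Gr(q,(\CC^{n_2})^\vee)$, the $K$-theoretic tangent-bundle expressions, the Grassmannian coefficient-extraction integral, and the final degree count. The normalization you flag is settled in the paper by testing the integration formula on the Schur basis. For degree reasons only the point class $(\xi_1\cdots\xi_k)^{n-k}$ survives, and the coefficient of $\xi_1^{k-1}\cdots\xi_k^{k-1}$ in the ordered product $\prod_{r\neq s}(\xi_r-\xi_s)$ is exactly $k!$, so no extra sign appears.
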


\begin{proof}
The representation space is a basepoint-free space of sections of $\cE$. If a general section is nowhere zero, then $c_{maq}(\cE)=0$ and both sides of \eqref{eq:kronecker-root-coefficient} vanish. Otherwise the general zero locus is smooth of the expected dimension, and Theorem~\ref{thm:GB} applies.

	Let $x_1,\ldots,x_a$ be the Chern roots of $\cS_1^\vee$ and
	$z_1,\ldots,z_q$ those of $\cQ_2$.  The roots of
	$\cE=(\cS_1^\vee\otimes\cQ_2)^{\oplus m}$ are the $x_i+z_j$, each
	with multiplicity $m$.  Moreover, in $K^0(X)$,
	\[	T\Gr(a,n_1)=n_1\cS_1^\vee-\cS_1^\vee\otimes\cS_1,
	\qquad
	T\Gr(b,n_2)=n_2\cQ_2-\cQ_2^\vee\otimes\cQ_2.
	\]
	Thus Theorem~\ref{thm:GB} reduces the calculation to the top-degree
	part of
	\begin{equation}\label{eq:kronecker-integrand}
		\frac{c(T_X)c_{maq}(\cE)}{c(\cE)}	=
		\frac{\prod_i(1+x_i)^{n_1}}{\prod_{i,k}(1+x_i-x_k)}
		\frac{\prod_j(1+z_j)^{n_2}}{\prod_{j,\ell}(1+z_j-z_\ell)}
		\prod_{i,j}\left(\frac{x_i+z_j}{1+x_i+z_j}\right)^m,
	\end{equation}
	with all inverse factors expanded at the origin.

	For a symmetric series $f$ in the Chern roots $\xi_1,\ldots,\xi_k$
	of the universal dual subbundle on $\Gr(k,n)$, we use
	\begin{equation}\label{eq:grassmann-coefficient-integration}
		\int_{\Gr(k,n)}f(\xi)	=
		\frac1{k!}
		[\xi_1^{n-1}\cdots\xi_k^{n-1}]
		\left(f(\xi)\prod_{r\ne s}(\xi_r-\xi_s)\right).
	\end{equation}
	Indeed, it is enough to test this on the Schur basis.  Degree
	considerations leave only the point class
	$s_{(n-k)^k}=(\xi_1\cdots\xi_k)^{n-k}$, and its value is one because
	\[
	[\xi_1^{k-1}\cdots\xi_k^{k-1}]
	\prod_{r\ne s}(\xi_r-\xi_s)=k!,
	\]
	by the Vandermonde determinant expansion; compare \cite[\S14.7]{Fu}.

	For the second factor identify
	$\Gr(b,n_2)\simeq\Gr(q,(\CC^{n_2})^\vee)$, so that $\cQ_2$ is the
	universal dual subbundle.  Applying
	\eqref{eq:grassmann-coefficient-integration} to both factors and
	substituting \eqref{eq:kronecker-integrand}, the two ordered
	Vandermonde products combine with the endomorphism denominators to
	produce
	\[\prod_{i\ne k}\frac{x_i-x_k}{1+x_i-x_k},
	\qquad
	\prod_{j\ne\ell}\frac{z_j-z_\ell}{1+z_j-z_\ell},
	\]
	which gives \eqref{eq:kronecker-root-coefficient}.

	Finally, the target monomial has total degree
	$a(n_1-1)+q(n_2-1)$, while the two Vandermonde products have degree
	$a(a-1)+q(q-1)$.  Hence the characteristic class contributes degree
	\[	a(n_1-a)+q(n_2-q)=ap+bq=\dim X.	\]
	Since $c_{maq}(\cE)$ has degree $maq$, the remaining factor
	$c(T_X)/c(\cE)$ is automatically taken in degree
	$ap+bq-maq=\delta$.  If $\delta<0$, the required coefficient is zero.
\end{proof}

For exact numerical evaluation one may instead specialize Theorem~\ref{thm:quiver-localization}. For example, take
$\lambda_{1,i}=i$ and $\lambda_{2,j}=n_1+j.$
The resulting finite sum is indexed by pairs $I\subseteq[n_1]$ and $J\subseteq[n_2]$ with $|I|=a$ and $|J|=b$,
and it can be evaluated in exact rational arithmetic. We do not record a separate formula,
since it is precisely the two-vertex specialization of \eqref{eq:quiver-localization}.

\begin{remark}\label{rem:kronecker-rank-strata}
The projection to $\Gr(a,n_1)$ is governed, for arbitrary $M$, by
\[\varphi_M:\cS_1^{\oplus m}\longrightarrow\cO^{n_2}.\]
On the locally closed stratum $\Sigma_s(M)=\{A:\rk(\varphi_M|_A)=s\}$ it is a relative Grassmannian bundle with fiber $\Gr(b-s,n_2-s)$. Hence
\begin{equation}\label{eq:kronecker-rank-strata}
\topchi\bigl(\Gr_{(a,b)}(M)\bigr)=
\sum_{s=0}^{\min\{b,ma,n_2\}}
\binom{n_2-s}{b-s}\,
\topchi\bigl(\Sigma_s(M)\bigr).
\end{equation}
If $D_s(M)=\{\rk\varphi_M\leq s\}$, then $\Sigma_s=D_s\setminus D_{s-1}$; for general $M$, the formulas of Parusi\'nski--Pragacz \cite[Theorem~2.10]{PP},
together with additivity, evaluate the terms in \eqref{eq:kronecker-rank-strata}. The same description yields the factorized extremes. With $P=ma$ and $B=a(n_1-a)=\dim\Gr(a,n_1)$, the inequalities below force the first rank-drop locus to have expected codimension greater than $B$; thus a general $\varphi_M$ has constant rank $P$ in the first case and constant rank $n_2$ in the second. If $n_2-P\geq B$ then
\[\topchi(F_M)=\binom{n_1}{a}\binom{n_2-P}{b-P},\]
whereas if $P-n_2\geq B$, then $\topchi(F_M)=\binom{n_1}{a}$ for $b=n_2$ and is zero for $b<n_2$.
\end{remark}

\begin{example}[A determinantal plane-curve family]\label{ex:3m-family}
	Let $\alpha=(3,m)$ and $\beta=(1,m-1)$, and let
	$M=(M_1,\ldots,M_m)$ be a general representation of $K_m$.  Write
	$A\cong\CC^m$ for the arrow space.  In this case
	\[	X=\mathbb P^2\times\mathbb P^{m-1},	\qquad
	\cE\cong\cO(1,1)^{\oplus m},	\qquad
	\delta=1.	\]

	There is a useful geometric description.  For $0\ne x\in\CC^3$, set
	\[
	B_M(x)=\bigl[M_1x\;M_2x\;\cdots\;M_mx\bigr]
	\in\operatorname{Mat}_m(\CC).
	\]
	A point $([x],[y])\in\mathbb P^2\times\mathbb P((\CC^m)^\vee)$
	lies in $\Gr_{(1,m-1)}(M)$ precisely when $yB_M(x)=0$.  Hence the
	projection to the first factor has image
	\[	D_M=\{[x]\in\mathbb P^2:\det B_M(x)=0\}.	\]
	The assignment $M\mapsto B_M$ identifies the representation space with
	the space of $m\times m$ matrices of linear forms on $\mathbb P^2$.
	For $m\ge2$, a general projective plane in
	$\mathbb P(\operatorname{Mat}_m)$ avoids the rank-$\le m-2$ locus and
	meets the determinant hypersurface transversely; the case $m=1$ is
	immediate.  Thus $D_M$ is a smooth plane curve of degree $m$, and
	$B_M(x)$ has corank one along $D_M$.  The left kernels therefore form a
	line bundle on $D_M$, giving
	\[	\Gr_{(1,m-1)}(M)	\cong	\mathbb P_{D_M}(\ker B_M^t)	\cong D_M.	\]
	Consequently,
	\begin{equation}\label{eq:3m-family}
		g\bigl(\Gr_{(1,m-1)}(M)\bigr)
		=\frac{(m-1)(m-2)}2,
		\qquad
		\topchi\bigl(\Gr_{(1,m-1)}(M)\bigr)=m(3-m).
	\end{equation}
	For $m=1,2$ the curve is rational, for $m=3$ it is elliptic, and for
	$m=4$ it is the smooth plane quartic of \cite[Example~3.6]{DWZ}.

	The two methods developed above recover \eqref{eq:3m-family} as follows.
	Let $h,k$ be the hyperplane classes of the two factors of $X$.  Since
	\[	\left[\frac{c(T_X)}{c(\cE)}\right]_1=c_1(T_X)-c_1(\cE) = (3-m)h	\]
	and $[\Gr_{(1,m-1)}(M)]=(h+k)^m$, the characteristic-class formula gives
	\[	\topchi\bigl(\Gr_{(1,m-1)}(M)\bigr)	=(3-m)\int_Xh(h+k)^m = m(3-m).	\]

	For the semi-invariant calculation, $\gamma=(2,1)$ and the degree-one
	Schubert basis is $\{h,k\}$. Since the virtual tangent class is
	$(3-m)h$, only the tuple $\bmu_h$, with
	$\mu_h(1)=(1)$ and $\mu_h(2)=\varnothing$, contributes. Put
	$W_1\cong\CC^2$ and $W_2\cong\CC$. For the natural change-of-arrow-basis action of $\GL(A)$,
	\[
	\Rep(K_m,\gamma)=A^\vee\otimes W_1^\vee\otimes W_2,
	\qquad
	\CC[\Rep(K_m,\gamma)]_1=A\otimes W_1\otimes W_2^\vee.
	\]
	Central characters force coordinate degree one, and tensoring with the covariant factor $W_1$ gives
	\[
	A\otimes W_1\otimes W_1\otimes W_2^\vee.
	\]
	Taking the $(\det W_1)\otimes W_2^{-1}$ isotypic component gives
	$A$, because $\bigwedge^2W_1$ occurs once in $W_1\otimes W_1$.
	Thus the relevant covariant space is the $\GL(A)$-module $A$; in particular
	$M_{\bmu_h}(\beta,\alpha)=m$. The virtual contribution is therefore
	$(3-m)[A]$, whose dimension is again $m(3-m)$.
\end{example}

\begin{example}[A Calabi--Yau complete-intersection family]\label{ex:very-good-kronecker}
	Let $m\ge2$, take
	\[	\alpha=(m,m),\qquad	\beta=(1,m-1),\qquad	\gamma=(m-1,1),	\]
	and let $M=(M_1,\ldots,M_m)$ be general.  Then
	\[X=\mathbb P^{m-1}\times\mathbb P^{m-1},	\qquad
	\cE\cong\cO(1,1)^{\oplus m},	\qquad
	\dim\Gr_{(1,m-1)}(M)=m-2.\]
	Writing a hyperplane in the second copy of $\CC^m$ as $\ker(y)$, the
	quiver Grassmannian is the incidence variety
	\[	\Gr_{(1,m-1)}(M)	=
	\bigl\{([x],[y]):yM_i x=0\text{ for }1\le i\le m\bigr\}.
	\]
	The map $\Rep(K_m,(m,m))\to H^0(X,\cE)$ is an isomorphism, so these are
	$m$ general equations of bidegree $(1,1)$.  Since
	$c_m(\cE)=(h+k)^m\ne0$, their common zero locus is a nonempty smooth
	complete intersection.  Adjunction gives
	\[	K_{\Gr_{(1,m-1)}(M)}	\cong
	\bigl(K_X\otimes\det\cE\bigr)|_{\Gr_{(1,m-1)}(M)}
	\cong\cO_{\Gr_{(1,m-1)}(M)}.
	\]
	Thus this is a smooth complete intersection with trivial canonical
	bundle. By the Lefschetz hyperplane theorem, for $m=3,4,5$ it is respectively an elliptic curve, a K3
	surface, and a Calabi--Yau threefold. For $m=2$ it consists of two
	reduced points.

	The Euler characteristic has the compact coefficient form
	\[\topchi\bigl(\Gr_{(1,m-1)}(M)\bigr)	=
		[h^{m-1}k^{m-1}]
		(h+k)^m
		\frac{(1+h)^m(1+k)^m}{(1+h+k)^m}.
	\]
	For the semi-invariant formula, write
	\[	\Theta_{m-2}=
	\left[\frac{(1+h)^m(1+k)^m}{(1+h+k)^m}\right]_{m-2}
	=\sum_{r=0}^{m-2}a_{m,r}h^rk^{m-2-r}.
	\]
	The Poincar\'e dual of $h^rk^{m-2-r}$ is
	$h^{m-1-r}k^{r+1}$.  Since the generic-fiber class is
	$(h+k)^m$, the corresponding covariant multiplicity is $M_r=\binom{m}{r+1}$.
	Hence Theorem~\ref{thm:main} becomes
	\begin{equation}\label{eq:very-good-semi-invariant}
		\topchi\bigl(\Gr_{(1,m-1)}(M)\bigr)=
		\sum_{r=0}^{m-2}a_{m,r}\binom{m}{r+1}.
	\end{equation}

	This identity also retains the arrow-space representation. If
	$A\cong\CC^m$ is the arrow space and $\GL(A)$ acts by change of arrow basis, then the covariant paired with
	$h^rk^{m-2-r}$ is the $\GL(A)$-module $\bigwedge^{m-1-r}A$.
	Indeed, the central characters force coordinate degree $m-1-r$, and
	Cauchy's formula shows that only the partition $(1^{m-1-r})$ can
	combine with $\bigwedge^r\CC^{m-1}$ to produce the determinant
	character.  Thus the representation-valued refinement of
	\eqref{eq:very-good-semi-invariant} is
	\[
	\sum_{r=0}^{m-2}a_{m,r}
	\left[\bigwedge^{m-1-r}A\right]
	\in R(\GL(A)),
	\]
	whose dimension is the Euler characteristic.

	For $m=2,3,4,5$ one has
	\[	\Theta_{m-2}=1,\qquad0,\qquad4hk,\qquad-5h^2k-5hk^2,	\]
	respectively.  The corresponding virtual $\GL(A)$-modules are
	\[	[A],\qquad0,\qquad4[\textstyle\bigwedge^2A],
	\qquad-5\bigl([\textstyle\bigwedge^2A]+[\textstyle\bigwedge^3A]\bigr),
	\]
	and their dimensions give
	\[\topchi\bigl(\Gr_{(1,m-1)}(M)\bigr)=2,\quad0,\quad24,\quad-100.	\]
\end{example}

\appendix

\section{Generalizations to other multiplicative genera}\label{sec:genera}

Keep the notation and standing assumptions of Section~4. The regular-zero-locus argument is not specific to the total Chern class. We record the multiplicative-genus form because it separates the characteristic class of the virtual tangent bundle from the covariant multiplicities determined by the generic fiber cycle.

Let $R$ be a commutative $\mathbb Q$-algebra and let
\[f(z)\in R[[z]],
\qquad f(0)=1.
\]
For a complex vector bundle $\cV$ with formal Chern roots $x_1,\ldots,x_m$, set
\[\Phi_f(\cV)=\prod_{i=1}^m f(x_i).\]
This is a multiplicative characteristic class and therefore extends to virtual bundles by
\[\Phi_f(\cV-\mathcal W)=\frac{\Phi_f(\cV)}{\Phi_f(\mathcal W)}.\]
If $Y$ is a smooth projective variety of dimension $d$, the associated characteristic genus is
\[\varphi_f(Y)=\int_Y[\Phi_f(T_Y)]_d.\]
We use the standard correspondence between normalized characteristic power series, multiplicative sequences, and genera; see \cite{Hi}.

\begin{proposition}[Characteristic genera of $\Gr_\beta(V)$]\label{prop:other-genera}
Let $V\in\Rep(Q,\alpha)$ be general. Define
\[\Theta_d^f=[\Phi_f(T_X-\cE)]_d
\in A^d(X)\otimes_{\mathbb Z}R,
\]
and write its Schubert expansion as
\[\Theta_d^f=
\sum_{|\bmu|=d}a_{\bmu}^f\sigma_{\bmu},
\qquad a_{\bmu}^f\in R.
\]
Then
\begin{align}
\varphi_f(\Gr_\beta(V))
&=\int_X c_r(\cE)\,\Theta_d^f \label{eq:genus-intersection}\\
&=\sum_{|\bmu|=d}a_{\bmu}^fM_{\bmu}(\beta,\alpha) \label{eq:genus-covariant}\\
&=\sum_{|\bmu|=d}a_{\bmu}^f
\dim\SI(\widehat Q,\widehat\gamma)_{\widehat\sigma_{\bmu}}.
\label{eq:genus-semi-invariant}
\end{align}
%Thus changing $f$ changes only the coefficients $a_{\bmu}^f$ contributed by the virtual tangent bundle.
\end{proposition}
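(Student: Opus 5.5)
The plan is to repeat the proof of Theorem~\ref{thm:GB} and Theorem~\ref{thm:covariant-euler} with the total Chern class replaced by $\Phi_f$. Everything goes through because $\Phi_f$ is multiplicative and natural under pullback.

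\emph{Reduction to the ambient space.} For general $V$, Proposition~\ref{prop:generic-smooth} makes $\Gr_\beta(V)$ smooth projective of dimension $d$, and gives the normal sequence \eqref{eq:normal-sequence}. Multiplicativity of $\Phi_f$ in short exact sequences and naturality under $i_V^*$ then yield
\[\Phi_f(T_{\Gr_\beta(V)})\,i_V^*\Phi_f(\cE)=i_V^*\Phi_f(T_X).\]
Since $f(0)=1$, the class $\Phi_f(\cE)$ is invertible in $A^*(X)\otimes R$. Hence
\[\Phi_f(T_{\Gr_\beta(V)})=i_V^*\Phi_f(T_X-\cE).\]
Taking degree-$d$ parts commutes with the graded ring map $i_V^*$, so we get
\[\varphi_f(\Gr_\beta(V))=\int_{\Gr_\beta(V)}i_V^*\Theta_d^f.\]
By the projection formula this equals $\int_X (i_V)_*[\Gr_\beta(V)]\cdot\Theta_d^f$. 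Formula \eqref{eq:fundamental-class} then gives \eqref{eq:genus-intersection}. All Chow groups are tensored with $R$ over $\ZZ$; the projection formula and \eqref{eq:fundamental-class} extend $R$-linearly.

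\emph{Schubert pairing.} Substitute the expansion \eqref{eq:fiber-schubert-expansion} of $c_r(\cE)\cap[X]=[\Gr_\beta(V)]$ and the given expansion of $\Theta_d^f$. The duality \eqref{eq:duality}, extended $R$-linearly, collapses the double sum to
\[\sum_{|\bmu|=d} a^f_{\bmu}\,N_{\bmu}(\beta,\alpha).\]
Proposition~\ref{prop:DSW-covariant} replaces $N_{\bmu}$ by $M_{\bmu}$, which gives \eqref{eq:genus-covariant}. Proposition~\ref{prop:flag-extension} then gives \eqref{eq:genus-semi-invariant}.

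\emph{Main obstacle.} The only points needing care are bookkeeping, not geometry. First, the expansion of $\Theta^f_d$ in the Schubert basis with coefficients in $R$ exists and is unique. This holds because $A^d(X)$ is free over $\ZZ$ with basis $\{\sigma_{\bmu}\}_{|\bmu|=d}$, so $A^d(X)\otimes R$ is free over $R$ with the same basis. Second, $\Phi_f$ must be well defined on virtual bundles; this uses the invertibility of $\Phi_f(\cE)$ noted above. Third, the genus $\varphi_f$ of the smooth projective variety $\Gr_\beta(V)$ must be the one defined via its tangent bundle, which is exactly our starting expression. No Chern--Gauss--Bonnet input is needed here, unlike in Theorem~\ref{thm:GB}.
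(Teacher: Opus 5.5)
Your proposal is correct and follows essentially the same route as the paper: multiplicativity of $\Phi_f$ along the normal sequence gives $\Phi_f(T_{\Gr_\beta(V)})=i_V^*\Phi_f(T_X-\cE)$, then the projection formula with \eqref{eq:fundamental-class} gives \eqref{eq:genus-intersection}, and Schubert duality together with Propositions~\ref{prop:DSW-covariant} and~\ref{prop:flag-extension} gives the other two equalities. The paper states the first step as the identity $[T_{\Gr_\beta(V)}]=i^*[T_X-\cE]$ in $K^0$, which is the same argument; your remarks on $R$-linear extension and on the absence of any Chern--Gauss--Bonnet input are accurate.
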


\begin{proof}
For the regular embedding $i\colon \Gr_\beta(V)\hookrightarrow X$, the normal sequence gives
\[[T_{\Gr_\beta(V)}]=i^*[T_X-\cE]\qquad\text{in }K^0(\Gr_\beta(V)).\]
Multiplicativity therefore yields
\[\Phi_f(T_{\Gr_\beta(V)})=i^*\Phi_f(T_X-\cE).\]
Since $i_*[\Gr_\beta(V)]=c_r(\cE)\cap[X]$, the projection formula gives \eqref{eq:genus-intersection}.
Substituting the Schubert expansions of $[\Gr_\beta(V)]$ and $\Theta_d^f$, and then using Schubert duality together with Proposition~\ref{prop:DSW-covariant},
gives \eqref{eq:genus-covariant}. Proposition~\ref{prop:flag-extension} then gives \eqref{eq:genus-semi-invariant}.
\end{proof}

The finite localization formula extends in parallel. With the weight multisets of
\eqref{eq:localization-tangent-weights}--\eqref{eq:localization-bundle-weights}, set
\[\mathsf H_d^f(\mathbf I)=[t^d]\frac{\prod_{\tau\in\mathsf T_{\mathbf I}}f(t\tau)}{\prod_{\varepsilon\in\mathsf E_{\mathbf I}}f(t\varepsilon)}.\]
The same proof as Theorem~\ref{thm:quiver-localization} gives
\begin{equation}\label{eq:genus-localization}
\varphi_f(\Gr_\beta(V))=
\sum_{\mathbf I}
\frac{\prod_{\varepsilon\in\mathsf E_{\mathbf I}}\varepsilon}
{\prod_{\tau\in\mathsf T_{\mathbf I}}\tau}
\,\mathsf H_d^f(\mathbf I).
\end{equation}
For $f(z)=1+z$, this is exactly \eqref{eq:quiver-localization}.

The main theorem is recovered from Proposition~\ref{prop:other-genera} by taking
\[f(z)=1+z,\]
for then $\Phi_f$ is the total Chern class and $\varphi_f(\Gr_\beta(V))=\topchi(\Gr_\beta(V))$ by Chern--Gauss--Bonnet. The Todd power series
\[f_{\mathrm{Td}}(z)=\frac{z}{1-e^{-z}}\]
gives the holomorphic Euler characteristic $\chi(\Gr_\beta(V),\cO_{\Gr_\beta(V)})$. The $L$-class power series
\[f_L(z)=\frac{z}{\tanh z}\]
gives the signature when the complex dimension is even; in odd complex dimension the corresponding genus vanishes.

A useful one-parameter refinement is the Hirzebruch $\chi_y$-genus
\[\chi_y(\Gr_\beta(V))=\sum_{p=0}^d\chi(\Gr_\beta(V),\Omega_{\Gr_\beta(V)}^p)y^p.\]
Its normalized characteristic power series is
\[f_y(z)=\frac{z(1+y)}{1-e^{-z(1+y)}}-yz.\]
Writing
\[[\Phi_{f_y}(T_X-\cE)]_d=\sum_{|\bmu|=d}a_{\bmu}(y)\sigma_{\bmu},\]
Proposition~\ref{prop:other-genera} gives the polynomial identity
\begin{equation}\label{eq:chi-y-semi-invariant}
	\chi_y(\Gr_\beta(V))=\sum_{|\bmu|=d}a_{\bmu}(y)\dim\SI(\widehat Q,\widehat\gamma)_{\widehat\sigma_{\bmu}}.
\end{equation}
The familiar specializations are
\[
\begin{aligned}
	\chi_{-1}(\Gr_\beta(V))&=\topchi(\Gr_\beta(V)),\\
	\chi_0(\Gr_\beta(V))&=\chi(\Gr_\beta(V),\cO_{\Gr_\beta(V)}),\\
	\chi_1(\Gr_\beta(V))&=
	\begin{cases}
		\operatorname{sign}(\Gr_\beta(V)),&d\text{ even},\\
		0,&d\text{ odd}.
	\end{cases}
\end{aligned}
\]
Thus a single collection of covariant, or flag-quiver semi-invariant, multiplicities supports all of these genera. Formula~\eqref{eq:chi-y-semi-invariant} records alternating sums of Hodge numbers; without further geometric input it does not determine the individual Hodge numbers of $\Gr_\beta(V)$.

Once the multiplicities $M_{\bmu}(\beta,\alpha)$ have been computed, changing the genus changes only the Schubert coefficients contributed by the virtual tangent bundle $T_X-\cE$; alternatively, the same invariants may be evaluated directly by finite localization.

\end{document}